\newtheorem{Rem}{Remark}
\newtheorem{sats}{Theorem}
\newtheorem{prop}{Proposition}
\newtheorem{lem}{Lemma}
\newcommand{\banm}{\begin{anm}}
\newcommand{\eanm}{\end{anm}}
\newcommand{\loc}{{\rm loc}}
\newcommand{\const}{{\rm const}}
\begin{document}

\title
{Oblique derivative problem \\
for  non-divergence parabolic equations\\
with discontinuous in time coefficients\\ in a wedge}
\author
{Vladimir Kozlov\footnote{Department of Mathematics, University of Link\"oping, SE-581 83
Link\"oping, Sweden} \ and Alexander Nazarov\footnote{St.-Petersburg Department of Steklov
Mathematical Institute, Fontanka, 27, St.-Petersburg, 191023, Russia, and
St.-Petersburg State University, Universitetskii pr. 28, St.-Petersburg, 198504, Russia}
}

\date{}
\maketitle
\begin{abstract}

\noindent We consider an oblique derivative problem in a wedge for non-di\-ver\-gence parabolic
equations with discontinuous in $t$ coefficients. We obtain
weighted coercive estimates of solutions in anisotropic Sobolev spaces.

\end{abstract}

\section{Introduction}


Consider the parabolic differential operator
\begin{equation}\label{Jan1}
{\cal L} \equiv\partial_t-a^{ij}(t)D_iD_j,
\end{equation}
where $x\in\mathbb R^n$, $t\in\mathbb R$ and the convention regarding summation from $1$ to $n$
with respect to repeated indices is adopted. Here and elsewhere $D_i$
denotes the operator of differentiation with respect to $x_i$, $i=1,\ldots,n$, $D=(D_1,\ldots,D_n)$, and
$\partial_t$ denotes differentiation with respect to $t$.
We assume that $a^{ij}$ are measurable real
valued functions of $t$ satisfying $a^{ij}=a^{ji}$ and
\begin{equation}\label{Jan2}
\nu|\xi|^2\le a^{ij}\xi_i\xi_j\le \nu^{-1}|\xi|^2, \qquad
\xi\in{\mathbb R}^n, \quad \nu=\const>0.
\end{equation}

In \cite{Kr2}, \cite{Kr} it was shown by Krylov that for coercive
estimates of $\partial_tu$ and $D(Du)$ one needs no smoothness
assumptions on coefficients $a^{ij}$ with respect to $t$. The only
assumption which is needed is estimate (\ref{Jan2}). Solvability
results in the whole space $\mathbb R^n\times\mathbb R$ for
equation (\ref{Jan1}) in $L_{p,q}$ spaces were proved in \cite{Kr2};
solvability of the Dirichlet problem in the half-space $\mathbb
R^n_+\times\mathbb R$ in weighted $L_{p,q}$ spaces was established
by Krylov \cite{Kr} for particular range of weights and by the
authors \cite{KN} for the whole range of weights. Similar estimates for the oblique derivative problem in a half-space
are obtained in \cite{KN2}, see also \cite{DKZ}.
\medskip

This paper addresses solvability results for the boundary value problems to (\ref{Jan1}) in the wedge. Namely, let $n\geq m\geq 2$ and
let $K$ be a  cone in $\mathbb R^m$. We assume that  the boundary of $\omega=K\cap \mathbb S^{m-1}$ is of class $C^{1,1}$,
where $\mathbb S^{m-1}$ is the unit sphere in $\mathbb R^m$. We put ${\cal K}=K\times\mathbb R^{n-m}$. We underline that the case $m=n$ where
${\cal K}=K$ is not excluded. In what follows we use the representation $x=(x',x'')$, where $x'\in\mathbb R^m$ and $x''\in\mathbb R^{n-m}$.

First, we recall the important notion of {\it critical exponents} for the operator ${\cal L}$ in
the wedge ${\cal K}$. They were introduced in \cite{KN1}, where the following
Dirichlet problem was considered:
\begin{equation}\label{Jan1a}
{\cal L}u=f\quad\mbox{in}\quad {\cal K}\times \mathbb R;\qquad
u=0\quad\mbox{on}\quad \partial {\cal K}\times \mathbb R.
\end{equation}
To define them we need the space ${\cal V}(Q_R^{\cal K}(t_0))$ of functions $u$ with finite norm
\begin{equation*}
\|u\|_{{\cal V}(Q_R^{\cal K}(t_0))}=\sup_{\tau\in\,(t_0-R^2,t_0]}\|u(\cdot,\tau)\|_{L_2(B_R\cap{\cal K})}
+\|Du\|_{L_2(Q_R^{\cal K}(t_0)))},
\end{equation*}
where
$$
B_R=\{x\,:\ |x|<R\};\qquad Q_R^{\cal K}(t_0)=(B_R\cap {\cal K})\times(t_0-R^2,t_0].
$$
We write $u\in {\cal V}_\loc(Q_R^{\cal K}(t_0))$ if
$u\in {\cal V}(Q_{R'}^{\cal K}(t_0))$ for all $R'\in (0,R)$.\medskip

The first critical exponent is defined as the supremum of all real $\lambda$
such that
\begin{equation}\label{0.4}
|u(x;t)|\leq C(\lambda,\kappa) \Big (\frac{|x'|}{R}\Big )^\lambda\sup_{
Q_{\kappa R}^{\cal K}(t_0)}|u|\quad\mbox{for}\quad (x;t)\in Q_{R/2}^{\cal K}(t_0)
\end{equation}
for a certain $\kappa\in (1/2,1)$ independent of $t_0$, $R$ and $u$.
This inequality must be satisfied for all  $t_0\in\mathbb R$, $R>0$ and
for all  $u\in {\cal V}_\loc(Q_{R}^{\cal K}(t_0))$  subject to
\begin{equation*}\label{0.5}
{\cal L}u=0\quad\mbox{in}\quad Q_R^{\cal K}(t_0);\qquad
u\big|_{x\in\partial{\cal K}}=0.
\end{equation*}
 We denote this critical exponent by $\lambda_c^+\equiv\lambda_c^+({\cal K},{\cal L})$.
It is shown in \cite{KN1} that definition of $\lambda_c^+$ does not depend on $\kappa$.
Since $\lambda=0$  satisfies (\ref{0.4}) we conclude that $\lambda_c^+\geq0$.

The second critical exponent is defined as $\lambda_c^-\equiv\lambda_c^-({\cal K},{\cal L}):=\lambda_c^+({\cal K},\widehat{{\cal L}})$, where
\begin{equation*}\label{0.51}
\widehat{{\cal L}}=\partial_t-a^{ij}(-t)D_iD_j.
\end{equation*}

Several properties of these critical exponents are established in \cite[Theorem 2.4]{KN1}. In particular,
\begin{enumerate}
 \item[(i)] $\lambda_c^+$ and $\lambda_c^-$ are strictly positive;
 \item[(ii)] they depend monotonically on ${\cal K}$. This means that if ${\cal K}_1\subset{\cal K}_2$ then
 $\lambda_c^+({\cal K}_1,{\cal L})\ge\lambda_c^+({\cal K}_2,{\cal L})$ and
$\lambda_c^-({\cal K}_1,{\cal L})\ge\lambda_c^-({\cal K}_2,{\cal L})$;
 \item[(iii)] if $a^{ij}=\delta_{ij}$ then
$$
\lambda_c^{\pm}({\cal K},{\cal L})=
\lambda_{\cal D}\equiv-\frac{m-2}{2}+\sqrt{\Lambda_{\cal D}+\frac{(m-2)^2}{4}},
$$
 where $\Lambda_{\cal D}$ is the first eigenvalue of the Dirichlet boundary value problem to
the Beltrami-Laplacian in $\omega$;
\item[(iv)] Let ${\cal L}'=\partial_t-\sum_{k,\ell=1}^mA_{k\ell}(t)D_kD_{\ell}$.
Then $\lambda_c^{\pm}({\cal K},{\cal L})=\lambda_c^{\pm}(K,{\cal L}')$.
 \end{enumerate}

In order to formulate the main result of \cite{KN1} we introduce two classes of anisotropic spaces.
For $1<p,q<\infty$ we define $L_{p,q}=L_{p,q}({\cal K}\times \mathbb R)$ and $\tilde{L}_{p,q}=\tilde{L}_{p,q}({\cal K}\times\mathbb R)$
as spaces of functions $f$ with finite norms
$$
\|f\|_{p,q}:=\big\|\|f(\cdot;t)\|_{p,{\cal K}}\big\|_{q,\mathbb R}=
\Big (\int\limits_{\mathbb R}\Big (\int\limits_{\cal K}|f(x;t)|^pdx\Big )^{q/p}dt\Big )^{1/q}
$$
and
$$|\!|\!|f|\!|\!|_{p,q}:=\big\|\|f(x,\cdot)\|_{q,\mathbb R}\big\|_{p,{\cal K}}=
\Big (\int\limits_{\cal K}\Big (\int\limits_{\mathbb R}|f(x;t)|^qdt\Big )^{p/q}dx\Big )^{1/p}
$$
respectively.

\begin{prop}[\cite{KN1}, Theorem 1.1]
 Suppose that
\begin{equation*}
2-\frac mp-\lambda_c^+<\mu<m-\frac mp+\lambda_c^-.
\end{equation*}
Then for any $f$ such that $|x'|^\mu f\in L_{p,q}$ (respectively, $|x'|^\mu f\in \widetilde L_{p,q}$)
there is a solution of the boundary value problem {\rm (\ref{Jan1a})} satisfying the following estimates:
\begin{equation}\label{beztilde}
\||x'|^\mu\partial_t u\|_{p,q}+\||x'|^\mu D(Du)\|_{p,q}+
\||x'|^{\mu-2} u\|_{p,q}\le C\ \||x'|^\mu f\|_{p,q};
\end{equation}
\begin{equation}\label{tilde}
|\!|\!||x'|^\mu\partial_t u|\!|\!|_{p,q}+
|\!|\!||x'|^\mu D(Du)|\!|\!|_{p,q}+ |\!|\!| |x'|^{\mu-2} u|\!|\!|_{p,q}\le C\ |\!|\!||x'|^\mu f|\!|\!|_{p,q}.
\end{equation}
This solution is unique in the space of functions with the
finite norms in the left-hand side of {\rm (\ref{beztilde})} (respectively, of {\rm (\ref{tilde}))}.
\end{prop}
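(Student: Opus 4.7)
The plan is to construct the solution via the Dirichlet Green's function of $\mathcal{L}$ on $\mathcal{K}\times\mathbb{R}$ and to derive the weighted coercive estimates from sharp pointwise kernel bounds involving the critical exponents $\lambda_c^\pm$. First I would establish existence of a Green's function $G(x,t;y,s)$ by approximating $a^{ij}(t)$ by smooth coefficients and passing to the limit via Krylov's whole-space theory \cite{Kr2} together with the half-space solvability results of \cite{Kr,KN}. The crucial pointwise estimate I would aim for is
$$
|G(x,t;y,s)|\le C\,(t-s)^{-n/2}\,e^{-c|x-y|^2/(t-s)}\Big(\tfrac{|x'|}{\sqrt{t-s}+|x'|}\Big)^{\lambda_c^+-\varepsilon}\Big(\tfrac{|y'|}{\sqrt{t-s}+|y'|}\Big)^{\lambda_c^--\varepsilon},
$$
for every small $\varepsilon>0$. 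The first edge factor comes from applying the definition of $\lambda_c^+$ (inequality (\ref{0.4})) to $x\mapsto G(x,t;y,s)$ iterated on dyadic parabolic cylinders shrinking to the edge $\{x'=0\}$; the second factor follows symmetrically from the analogue for $\widehat{\mathcal L}$ in the $(y,s)$ variables, using that $G$ is the fundamental solution of the backward adjoint problem.

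Given these kernel bounds, I would write $u(x,t)=\int\!\!\int G(x,t;y,s)f(y,s)\,dy\,ds$ and estimate the weighted norms by Schur's test in the edge variables. After inserting the factors $|x'|^\mu$ and $|y'|^{-\mu}$, the $y'$-integral over a cross-section of the cone converges near $y'=0$ exactly when $\mu<m-m/p+\lambda_c^-$, while the dual $x'$-integral converges near $x'=0$ exactly when $\mu>2-m/p-\lambda_c^+$; the Gaussian factor handles the transverse $x''$-direction and behaviour at spatial infinity. Combining this spatial bound with Krylov's $L_q$-in-time theory yields the estimates for $|x'|^\mu\partial_tu$ and $|x'|^\mu D(Du)$ in (\ref{beztilde}); the weighted Hardy inequality in the cross-section then upgrades these to the bound on $|x'|^{\mu-2}u$. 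The parallel estimate (\ref{tilde}) follows from the same kernel by swapping orders of integration, since the $t$-only dependence of $a^{ij}$ makes the resulting vector-valued Calder\'on--Zygmund operator bounded. Uniqueness in each class is immediate by applying the a priori bound to the difference of two solutions.

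The principal obstacle is establishing the edge factor in the Green's function estimate under only measurability in $t$: standard parametrix constructions and elliptic Harnack inequalities up to a corner are unavailable, so one must iterate the scale-invariant oscillation decay (\ref{0.4}) on a telescoping chain of parabolic cylinders and couple the resulting decay to a local $L_2$--$L_\infty$ bound for $G$ so that the pointwise gain at the edge depends only on $\lambda_c^\pm-\varepsilon$ rather than on any modulus of continuity of $a^{ij}$. A secondary, essentially technical, difficulty lies in verifying Schur's test uniformly up to the endpoints of the admissible interval for $\mu$, where the constants in (\ref{beztilde})--(\ref{tilde}) necessarily blow up and the sharpness of the range becomes visible.
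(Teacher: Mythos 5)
Your overall strategy---construct the Green function $\Gamma^{\cal D}_{\cal K}$, show it decays near the edge at a rate governed by $\lambda_c^{\pm}$, and feed those kernel bounds into a weighted operator estimate---matches the spirit of the argument in \cite{KN1}, and of the closely parallel proofs of Theorems~\ref{prop11} and~\ref{Th1s} in this paper. However, there is a genuine gap at the step where you pass from the pointwise bound on $|G|$ to the coercive estimates. A Gaussian-with-edge-factor bound on $G$ itself, inserted into Schur's test, cannot control $\||x'|^\mu D(Du)\|_{p,q}$ or $\||x'|^\mu\partial_t u\|_{p,q}$: the relevant kernel is $D_xD_xG$, which carries a non-integrable singularity of order $(t-s)^{-(n+2)/2}$ at the diagonal, so the resulting operator is singular and Schur's test simply does not apply. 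What is actually needed---and what is proved in \cite{KN1} (cf.\ Proposition~\ref{Green_D} in this paper)---is a full family of pointwise bounds on $D_x^\alpha D_y^\beta\Gamma^{\cal D}_{\cal K}$ in which the edge factors degrade as ${\cal R}_{x,t-s}^{\lambda^+-|\alpha'|}$ and ${\cal R}_{y,t-s}^{\lambda^--|\beta'|}$ with the order of differentiation, together with $r_x$, $r_y$ factors near $\partial{\cal K}$. Even granted those bounds, the singular part must still be split off as a whole-space Calder\'on--Zygmund operator (Krylov's theorem, Theorem~\ref{whole} here) while the remainder is handled by the integral-operator lemmas Propositions~\ref{L_p} and~\ref{L_p_1}; your Schur-type argument would only suffice for the less singular pieces such as the one controlling $\||x'|^{\mu-2}u\|_{p,q}$.

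A secondary gap is in your treatment of the two anisotropic scales. Obtaining $L_{p,q}$ boundedness from $\widetilde L_{p,q}$ boundedness (or conversely) is not a matter of swapping the order of integration: the paper derives the $\widetilde L_{p,q}$ case by generalized Riesz--Thorin interpolation between $L_p$ and $\widetilde L_{p,\infty}$ plus duality, whereas the $L_{p,q}$ case with $p\neq q$ requires the extra oscillation estimate of Lemma~\ref{weak_2} type (a bound on $\partial_s$ of the kernel) together with the criterion of \cite[Theorem~3.8]{BIN}. Without this machinery the range $p\neq q$ is not reached, and the ``vector-valued Calder\'on--Zygmund'' step you invoke is precisely the part that needs proof.
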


Here we complement this result by considering the problem
\begin{equation}\label{2.6}
Lu=f_0+\mbox{div}\,({\bf f})\qquad\mbox{in}\quad \mathbb {\cal K}\times\mathbb R;\qquad u\big|_{x\in\partial{\cal K}}=0,
\end{equation}
where ${\bf f}=(f_1,\ldots,f_n)$. Denote by $\Gamma^{\cal D}_{\cal K}$ the Green function of problem (\ref{Jan1a})
(this Green function was constructed in \cite{KN1}). One of the results of this paper is the following.

\begin{sats}\label{prop11} Let
\begin{equation}\label{mu}
1-\frac mp-\lambda_c^+<\mu<m-1-\frac mp+\lambda_c^-.
\end{equation}

\noindent {\rm (i)} Suppose that $|x'|^{\mu+1}f_0,\,|x'|^\mu {\bf f}\in
\widetilde{L}_{p,q}({\cal K}\times\mathbb R)$. Then the function
\begin{equation}\label{2.8}
u(x;t)=\int\limits_{-\infty}^t\int\limits_{\cal K}\Big(\Gamma^{\cal D}_{\cal K}(x,y;t,s)f_0(y;s)
-D_y\Gamma^{\cal D}_{\cal K}(x,y;t,s)\cdot{\bf f}(y;s)\Big)\,dyds
\end{equation}
gives a weak solution to problem {\rm (\ref{2.6})} and satisfies the estimate
\begin{equation}\label{2.9}
|\!|\!||x'|^\mu D u|\!|\!|_{p,q}+|\!|\!||x'|^{\mu-1} u|\!|\!|_{p,q}\leq
C(|\!|\!||x'|^{\mu+1}f_0|\!|\!|_{p,q}+|\!|\!||x'|^\mu {\bf f}|\!|\!|_{p,q}).
\end{equation}

\noindent {\rm (ii)} Suppose that $|x'|^{\mu+1}f_0,\,|x'|^\mu {\bf f}\in
L_{p,q}({\cal K}\times\mathbb R)$. Then the function {\rm (\ref{2.8})}
gives a weak solution to problem {\rm (\ref{2.6})} and satisfies the estimate
\begin{equation}\label{2.9a}
\||x'|^\mu D u\|_{p,q}+\||x'|^{\mu-1} u\|_{p,q}\leq C(\||x'|^{\mu+1}
f_0\|_{p,q}+\||x'|^\mu {\bf f}\|_{p,q}).
\end{equation}
\end{sats}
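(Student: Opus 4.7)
My plan is to derive Theorem~\ref{prop11} from Proposition~1 by combining a duality argument (to handle the $u$-term on the left of (\ref{2.9})--(\ref{2.9a})) with a dyadic localization (to handle the $Du$-term). The admissible range (\ref{mu}) for $\mu$ is precisely the image under the involution $\mu\mapsto 1-\mu$ of the Proposition~1 range for the adjoint operator $\widehat{\cal L}$ with dual exponents $(p',q')$; the asymmetric roles of $\lambda_c^+$ and $\lambda_c^-$ in (\ref{mu}) reflect the identity $\lambda_c^+(\widehat{\cal L})=\lambda_c^-({\cal L})$ together with $m/p'=m-m/p$.

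For the $u$-bound, given a scalar test function $\phi$, I let $v$ solve the adjoint problem $\widehat{\cal L}v=\phi$ in ${\cal K}\times\mathbb R$ with $v|_{\partial{\cal K}}=0$, constructed from the Green function of $\widehat{\cal L}$. Since $1-\mu$ lies in the Proposition~1 admissible range for $\widehat{\cal L}$ with exponents $(p',q')$, the proposition yields
\begin{equation*}
|\!|\!||x'|^{-\mu-1} v|\!|\!|_{p',q'}+|\!|\!||x'|^{1-\mu}D(Dv)|\!|\!|_{p',q'}\leq C\,|\!|\!||x'|^{1-\mu}\phi|\!|\!|_{p',q'},
\end{equation*}
and a standard weighted interpolation (by scaling and interior parabolic interpolation on dyadic annuli) strengthens this to $|\!|\!||x'|^{-\mu}Dv|\!|\!|_{p',q'}\le C\,|\!|\!||x'|^{1-\mu}\phi|\!|\!|_{p',q'}$. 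Pairing $u$ from (\ref{2.8}) against $\phi$ and integrating by parts (boundary terms vanish since $u=v=0$ on $\partial{\cal K}$) yields
\begin{equation*}
\int u\,\phi\,dx\,dt = \int f_0\,v\,dx\,dt - \int {\bf f}\cdot Dv\,dx\,dt.
\end{equation*}
H\"older's inequality and the above bounds on $v$ and $Dv$ then give $|\int u\,\phi\,dx\,dt|\leq C(|\!|\!||x'|^{\mu+1}f_0|\!|\!|_{p,q}+|\!|\!||x'|^\mu{\bf f}|\!|\!|_{p,q})\cdot|\!|\!||x'|^{1-\mu}\phi|\!|\!|_{p',q'}$, and taking the supremum over $\phi$ produces the $u$-term of (\ref{2.9}). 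The $L_{p,q}$ version (\ref{2.9a}) is obtained by the parallel argument using $L_{p,q}$--$L_{p',q'}$ duality.

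For the $Du$-bound I localize. I cover ${\cal K}\times\mathbb R$ by a finite-overlap family of parabolic cylinders $Q_k$ with $|x'|\sim r_k$. On each $Q_k$, a standard local parabolic $W^{1,p}$ estimate for $Lu=f_0+\mbox{div}\,({\bf f})$ with $u=0$ on $\partial{\cal K}\cap Q_k$ -- a localized instance of Proposition~1 at the appropriate scale, together with Krylov's $L_p$ theory from \cite{Kr2} -- yields
\begin{equation*}
\|Du\|_{L_p(Q_k/2)}\leq C\bigl(r_k^{-1}\|u\|_{L_p(Q_k)}+r_k\|f_0\|_{L_p(Q_k)}+\|{\bf f}\|_{L_p(Q_k)}\bigr).
\end{equation*}
Multiplying by $r_k^\mu\sim|x'|^\mu$ and summing in the mixed $L_{p,q}$ or $\widetilde L_{p,q}$ norm (using finite overlap) converts these local bounds into a global weighted bound on $|x'|^\mu Du$ in terms of $|x'|^{\mu-1}u$, $|x'|^{\mu+1}f_0$, and $|x'|^\mu{\bf f}$. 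Combined with the duality step this closes both (\ref{2.9a}) and (\ref{2.9}). That (\ref{2.8}) actually defines a weak solution of (\ref{2.6}) is then verified by approximating $f_0,{\bf f}$ by smooth compactly supported functions and passing to the limit using the a priori bounds just derived.

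The main technical obstacle I expect is the local parabolic $W^{1,p}$ estimate up to the lateral boundary $\partial{\cal K}$ for an operator with merely measurable in $t$ coefficients and a divergence-form right-hand side: it is here that the $C^{1,1}$ regularity of $\omega$ enters crucially. The weighted interpolation used in the duality step, by contrast, reduces to interior parabolic interpolation on fixed-scale cylinders and presents no new range restriction beyond (\ref{mu}).
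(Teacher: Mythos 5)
Your proposal is correct, but for the $u$-bound it takes a genuinely different route from the paper. The paper bounds $|\!|\!||x'|^{\mu-1}u|\!|\!|_{p,q}$ directly: it writes $u$ through the weighted kernels ${\cal T}_0=\frac{|x'|^{\mu-1}}{|y'|^{\mu+1}}\Gamma^{\cal D}_{\cal K}$ and ${\cal T}_1=\frac{|x'|^{\mu-1}}{|y'|^{\mu}}D_y\Gamma^{\cal D}_{\cal K}$, plugs the pointwise Green-function bounds of Proposition~\ref{Green_D} into the abstract criterion Proposition~\ref{L_p} (then Riesz--Thorin) for $\widetilde L_{p,q}$ with $q\ge p$, and, for the $L_{p,q}$ scale, proves the extra Lemma~\ref{weak_2} verifying the hypotheses of Proposition~\ref{L_p_1} so that the Calder\'on--Zygmund-type criterion \cite[Theorem~3.8]{BIN} can be applied; the remaining ranges of $q$ are recovered by duality. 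You instead pair $u$ against a test function $\phi$, solve the time-reversed adjoint Dirichlet problem $\widehat{\cal L}v=\phi$, and import the bounds on $v,\,D(Dv)$ from Proposition~1 applied to $\widehat{\cal L}$ with weight $1-\mu$ and exponents $(p',q')$; your observation that $\lambda_c^{+}(\widehat{\cal L})=\lambda_c^{-}({\cal L})$ and $m/p'=m-m/p$ turn the range (\ref{mu}) into exactly the Proposition~1 range for this adjoint problem is correct and is the key computation. The $Du$-bound via dyadic localization plus a local $W^1_p$-estimate up to the boundary is essentially identical to the paper's second step. An attractive feature of your route is that duality treats the $L_{p,q}$ and $\widetilde L_{p,q}$ scales uniformly and avoids Lemma~\ref{weak_2} and the BIN machinery altogether; the price is that you also need the weighted interpolation bound $|\!|\!||x'|^{-\mu}Dv|\!|\!|_{p',q'}\lesssim |\!|\!||x'|^{-\mu-1}v|\!|\!|_{p',q'}+|\!|\!||x'|^{1-\mu}D(Dv)|\!|\!|_{p',q'}$ on $v$, which in the tilde scale is a Banach-valued (target $L_{q'}(\mathbb R)$) Gagliardo--Nirenberg inequality on dyadic annuli -- a point worth stating explicitly, since the scalar inequality applied slice-by-slice in $t$ does not directly commute with the inner $L_{q'}$-norm. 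With that caveat noted, both routes are sound.
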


Let us turn to  the oblique derivative problem in ${\cal K}$
\begin{equation}\label{2.6zz}
Lu=f\quad\mbox{in}\quad \mathbb {\cal K}\times\mathbb R;\qquad D_1 u=0\quad\mbox{on}\quad (\partial K\setminus O)\times\mathbb R^{n-m}\times\mathbb R.
\end{equation}

We assume additionally that the cone $K$ is strictly Lipschitz, i.e.
\begin{equation}\label{LUD1}
K=\{x'=(x_1,\hat{x})\,:\, x_1>\phi(\hat{x}),\;\hat{x}\in\mathbb R^{m-1}\},
\end{equation}
where $\phi$ is a Lipschitz function, i.e.
\begin{equation}\label{LUD2}
|\phi(\hat{x})-\phi(\hat{y})|\leq \Lambda |\hat{x}-\hat{y}|\qquad\mbox{for all}
\quad\hat{x},\,\hat{y}\in\mathbb R^{m-1}.
\end{equation}
The main result of this paper is the following.

\begin{sats}\label{Th1s} Let $1<p,q<\infty$ and let $\mu$ be subject to
\begin{equation}\label{mu1}
-\frac mp+(1-\lambda_c^+)_+<\mu<m-\frac mp-(1-\lambda_c^-)_+
\end{equation}
{\rm (}here $a_+=\max \{a,0\}${\rm )}.
\smallskip

{\rm (i)} If $|x'|^\mu f\in\widetilde{L}_{p,q}({\cal K}\times\mathbb R)$ then solution to problem {\rm (\ref{2.6zz})} defined by
{\rm (\ref{2.11ag})}, {\rm (\ref{Green_N})}, satisfies
\begin{equation}\label{TTn1a}
 |\!|\!||x'|^\mu\partial_t u|\!|\!|_{p,q}+|\!|\!||x'|^\mu D(Du)|\!|\!|_{p,q}\le
C\ |\!|\!||x'|^\mu f|\!|\!|_{p,q}.
\end{equation}

{\rm (ii)} If $|x'|^\mu f\in L_{p,q}({\cal K}\times\mathbb R)$ then solution to problem {\rm (\ref{2.6zz})} defined by
{\rm (\ref{2.11ag})}, {\rm (\ref{Green_N})}, satisfies
\begin{equation}\label{TTn1b}
\||x'|^\mu\partial_t u\|_{p,q}+\||x'|^\mu D(Du)\|_{p,q}\le C\ \||x'|^\mu f\|_{p,q}.
\end{equation}
The constant $C$ depends only on $\nu$, $\mu$, $p$, $q$ and ${\cal K}$.
\end{sats}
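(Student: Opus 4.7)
My plan is to reduce the coercive bound for the oblique derivative problem to the Dirichlet result Theorem~\ref{prop11}, using the fact that if $D_1u=0$ on $(\partial K\setminus O)\times\mathbb R^{n-m}\times\mathbb R$ then $v:=D_1u$ vanishes on that same set. As a preliminary step I would verify that the weight range (\ref{mu1}) of Theorem~\ref{Th1s} is contained in the range (\ref{mu}) of Theorem~\ref{prop11}: using $a_+=\max\{a,0\}$ together with $\lambda_c^\pm\ge 0$ from property (i) of \cite[Theorem 2.4]{KN1}, the lower endpoint of (\ref{mu1}) exceeds that of (\ref{mu}) by $(\lambda_c^+-1)_+\ge 0$ and symmetrically for the upper endpoint, so Theorem~\ref{prop11} is applicable to $v$ with the same $\mu$.

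Since ${\cal L}$ has $t$-only coefficients it commutes with $D_1$, so differentiating the equation yields
\begin{equation*}
{\cal L}v=D_1f=\mathrm{div}(fe_1)\quad\mbox{in}\quad{\cal K}\times\mathbb R;\qquad v\big|_{\partial{\cal K}\times\mathbb R}=0,
\end{equation*}
which is the divergence-form Dirichlet problem (\ref{2.6}) with $f_0=0$ and ${\bf f}=fe_1$. Applying Theorem~\ref{prop11} to $v$ then gives, in the $\widetilde{L}_{p,q}$ case,
\begin{equation*}
|\!|\!||x'|^\mu D(D_1u)|\!|\!|_{p,q}+|\!|\!||x'|^{\mu-1}D_1u|\!|\!|_{p,q}\le C\,|\!|\!||x'|^\mu f|\!|\!|_{p,q},
\end{equation*}
and the analogous $L_{p,q}$ estimate. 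This controls every mixed second derivative $D_jD_1u$, handling all second derivatives of $u$ that involve differentiation in $x_1$.

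It remains to estimate $\partial_tu$ and the derivatives $D_iD_ju$ with $i,j\ge 2$. Rearranging the equation,
\begin{equation*}
\partial_tu-\sum_{i,j\ge 2}a^{ij}(t)D_iD_ju \;=\; f+a^{11}(t)D_1^2u+2\sum_{j\ge 2}a^{1j}(t)D_1D_ju,
\end{equation*}
whose right-hand side is already bounded in the target weighted norm. The operator on the left is uniformly parabolic in $(x_2,\dots,x_n,t)$, the submatrix $(a^{ij})_{i,j\ge 2}$ being positive-definite by (\ref{Jan2}). To decouple $\partial_tu$ from the individual $D_iD_ju$, I would combine translation invariance in $x''$ (which permits Krylov's whole-space coercive estimate \cite{Kr2} in those directions) with the flattening change of variables $y_1=x_1-\phi(\hat x)$, $\hat y=\hat x$, $y''=x''$ allowed by the strict Lipschitz hypothesis (\ref{LUD1})--(\ref{LUD2}); since $D_{x_1}=D_{y_1}$, the oblique condition becomes the genuine Neumann condition $D_{y_1}u=0$ on $\{y_1=0\}$, and the task reduces, away from the edge $\{x'=0\}$, to the half-space oblique derivative problem of \cite{KN2}. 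Existence of the weak solution is built into the explicit Green function formulas (\ref{2.11ag})--(\ref{Green_N}). I expect the main obstacle to be this final stage: flattening forces the new coefficients to depend on $\hat y$ through $\phi$, destroying the $t$-only structure; handling the resulting variable-coefficient perturbation by a coefficient-freezing and partition-of-unity argument, while keeping the weighted norms under control near the edge where the critical exponents $\lambda_c^\pm$ are active, will be the most delicate step.
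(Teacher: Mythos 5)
Your first two steps are correct and in fact reflect the ideas behind the paper's construction: the formula (\ref{Green_N}), $\Gamma_{\cal K}^{\cal N}(x,y;t,s)=\int_{x_1}^\infty D_{y_1}\Gamma_{\cal K}^{\cal D}((\zeta,\hat{x},x''),y;t,s)\,d\zeta$, is precisely the statement that $v:=D_1u$ solves the divergence-form Dirichlet problem and is then integrated back up. Your verification that the range (\ref{mu1}) is contained in (\ref{mu}) is also correct, and Theorem~\ref{prop11} does give the weighted bound on $D(D_1u)$, hence on all second derivatives $D_jD_1u$.

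The gap is in step 3. After that step you only control $D_jD_1u$; the equation then yields a bound on the single combination $\partial_tu-\sum_{i,j\ge 2}a^{ij}(t)D_iD_ju$, not on $\partial_tu$ and the tangential Hessian separately. To decouple them you propose flattening $y_1=x_1-\phi(\hat x)$ and applying Krylov's whole-space estimate plus coefficient-freezing. This cannot work near the edge: after flattening, the leading coefficients involve $D\phi(\hat y)$, which, since $\phi$ is positively homogeneous of degree one, is homogeneous of degree zero and hence discontinuous at $\hat y=0$. Coefficient-freezing (and any perturbative argument off the $t$-only case) requires small oscillation of the coefficients, which is unavailable precisely in the region where the weight $|x'|^\mu$ is singular and the critical exponents $\lambda_c^{\pm}$ of condition (\ref{mu1}) are operative. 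Put differently, for fixed $x_1$ the slice of ${\cal K}$ in $(\hat x,x'')$ is not all of $\mathbb R^{n-1}$, so the tangential problem is not a whole-space problem, and there is no boundary condition that closes it; the sharp range (\ref{mu1}) encodes the genuine cone behaviour and cannot be recovered by local freezing near the vertex.

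The paper avoids this altogether by estimating the kernel $D_xD_x\Gamma_{\cal K}^{\cal N}$ directly. It proves pointwise estimates for all second derivatives of $\Gamma_{\cal K}^{\cal N}$ (Theorem~\ref{Th2}, via Lemma~\ref{Zhut'}), then splits $\Gamma_{\cal K}^{\cal N}=\Gamma+(\Gamma_{\cal K}^{\cal N}-\Gamma)$, treats the free-space part with Theorem~\ref{whole}, and treats the difference with the refined kernel bounds of Theorem~\ref{Th3} (using Lemma~\ref{Lozenka1}), feeding everything into the weighted integral-operator Propositions~\ref{L_p} and~\ref{L_p_1} to obtain Lemmas~\ref{prop12} and~\ref{prop13}. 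This kernel-level analysis is the essential content of the proof and is what your proposal leaves unresolved; only then is $\partial_tu$ recovered from the equation.
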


\begin{Rem}
 Since $\lambda_c^+$ and $\lambda_c^-$ are positive, the intervals for $\mu$ in {\rm (\ref{mu})} and  {\rm (\ref{mu1})} are non-empty even for $m=2$.
\end{Rem}

We use an approach based on the study of the Green functions. In Section \ref{Dir} we collect
(partially known) results on the estimate of the Green function for
equation (\ref{Jan1}) in the whole space and in the wedge subject to the Dirichlet boundary condition. In particular in Sect. \ref{WDir} we prove
Theorem \ref{prop11}.

Section \ref{Neu} is devoted to the estimates of the Green function for the oblique derivative problem
 and to the proof of Theorem \ref{Th1s}.\medskip

Let us recall some notation. In what follows we denote by the same letter the kernel  and the
corresponding integral operator, i.e.
\begin{equation*}
({\cal T}h)(x;t)=\int\limits_{-\infty}^t\int\limits_{\mathbb R^n}
{\cal T}(x,y;t,s)h(y;s)\,dyds.
 \end{equation*}
Here we expand functions $\cal T$ and $h$ by zero to whole space-time if necessary.\medskip

For $x\in{\cal K}$, $d(x)$ is the distance from $x$ to $\partial {\cal K}$, and
$r_x=\frac{d(x)}{|x'|}$. We set
\begin{equation*}
{\cal R}_{x,t}=\frac{|x'|}{|x'|+\sqrt{t}};\qquad
{\cal R}_{y,t}=\frac{|y'|}{|y'|+\sqrt{t}}.
\end{equation*}
We use the letter $C$ to denote
various positive constants. To indicate that $C$ depends on some
parameter $a$, we sometimes write $C(a)$.

\section{Preliminary results}\label{Dir}
\subsection{The estimates in the whole space}\label{Rn}

Denote by $\Gamma$ the Green function of the operator ${\cal L}$
in the whole space:
\begin{equation*}
\Gamma(x,y;t,s)= \frac { \det\big(\int_s^t A(\tau)d\tau\big)^{-\frac
12}} {(4\pi)^{\frac n2}} \exp \bigg(-\frac {\Big(\big(\int_s^t
A(\tau) d\tau\big)^{-1} (x-y),(x-y)\Big)}4\bigg)
\end{equation*}
for $t>s$ and $0$ otherwise.  Here  $A(t)$ is  the matrix $\{
a^{ij}(t)\}_{i,j=1}^n$. The above representation implies, in
particular, the following estimates.

\begin{prop}\label{Pr1} Let $\alpha$ and $\beta$ be two arbitrary multi-indices. Then
\begin{equation}\label{May0}
|D_x^\alpha D_y^\beta \Gamma(x,y;t,s)|\le C\,(t-s)^{-\frac
{n+|\alpha|+|\beta|}2} \,\exp\left(-\frac{\sigma|x-y|^2}{t-s}\right),
\end{equation}
\begin{equation}\label{May0'}
|D_x^\alpha D_y^\beta\partial_s \Gamma(x,y;t,s)|\le C\,(t-s)^{-\frac
{n+|\alpha|+|\beta|+2}2}
\,\exp\left(-\frac{\sigma|x-y|^2}{t-s}\right)
\end{equation}
for $x,y\in\mathbb R^n$ and $s<t$. Here $\sigma$ depends only on the
ellipticity constant $\nu$ and $C$ may depend on $\nu$, $\alpha$ and $\beta$.
\end{prop}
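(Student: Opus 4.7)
My plan is to exploit the explicit Gaussian representation of $\Gamma$ and reduce everything to bookkeeping of matrix norms for $B(t,s):=\int_s^t A(\tau)\,d\tau$ and its inverse, then close up with the standard polynomial-into-exponential absorption trick.

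First I would record the basic algebraic consequences of the ellipticity assumption (\ref{Jan2}). Integrating $\nu|\xi|^2\le a^{ij}(\tau)\xi_i\xi_j\le\nu^{-1}|\xi|^2$ in $\tau$ from $s$ to $t$ yields $\nu(t-s)I\le B(t,s)\le \nu^{-1}(t-s)I$ as quadratic forms, so all eigenvalues of $B(t,s)$ lie in $[\nu(t-s),\nu^{-1}(t-s)]$. This gives simultaneously
\[
\det B(t,s)\ge \nu^{n}(t-s)^{n}, \qquad \bigl(B^{-1}(t,s)\xi,\xi\bigr)\ge \nu\,\frac{|\xi|^{2}}{t-s},
\]
and $\|B^{-1}(t,s)\|\le \nu^{-1}(t-s)^{-1}$. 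Plugging these into the explicit formula for $\Gamma$ immediately produces (\ref{May0}) in the base case $\alpha=\beta=0$ with $\sigma=\nu/4$.

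Next I would handle the $x$- and $y$-derivatives. Since $\Gamma(x,y;t,s)$ depends on $x,y$ only through $z=x-y$, one has $D_x^\alpha D_y^\beta\Gamma=(-1)^{|\beta|}D_z^{\alpha+\beta}G$, where $G(z;t,s)$ is the Gaussian factor. Differentiating $\exp(-(B^{-1}z,z)/4)$ once pulls down $-\tfrac12 B^{-1}z$, and repeated differentiation produces a polynomial of degree $|\alpha|+|\beta|$ in the components of $B^{-1}z$ times the original exponential. Using the bound $|B^{-1}z|\le \nu^{-1}|z|/(t-s)$, each such factor is controlled by $|x-y|/(t-s)$. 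Writing $r=|x-y|/\sqrt{t-s}$, I obtain a prefactor bounded by a polynomial in $r$ times $(t-s)^{-(n+|\alpha|+|\beta|)/2}$. The elementary inequality $r^{k}e^{-\sigma r^{2}}\le C(k,\sigma,\sigma')e^{-\sigma' r^{2}}$ for any $0<\sigma'<\sigma$ absorbs the polynomial into the Gaussian at the cost of decreasing $\sigma$, producing (\ref{May0}).

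For the $s$-derivative I would compute directly from the explicit formula, using $\partial_s B(t,s)=-A(s)$, hence $\partial_s(B^{-1})=B^{-1}A(s)B^{-1}$ and $\partial_s\log\det B=-\mathrm{tr}(B^{-1}A(s))$. This gives
\[
\partial_s\Gamma=\Gamma\cdot\Bigl(\tfrac12\mathrm{tr}\bigl(B^{-1}A(s)\bigr)-\tfrac14\bigl(B^{-1}A(s)B^{-1}z,z\bigr)\Bigr).
\]
The trace term is bounded by $C(\nu,n)(t-s)^{-1}$, while the quadratic term is bounded by $C(\nu)|z|^{2}(t-s)^{-2}$, i.e.\ by $C(t-s)^{-1}\cdot r^{2}$. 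Hence $\partial_s\Gamma$ is dominated by $(t-s)^{-1}$ times a polynomial in $r$ against the same Gaussian, after which the absorption trick again yields (\ref{May0'}). Mixed derivatives $D_x^\alpha D_y^\beta\partial_s\Gamma$ are treated by combining the two computations: the $s$-derivative contributes extra polynomial factors in $B^{-1}z$ of total degree at most $2$ and an extra prefactor $(t-s)^{-1}$, and these multiply the polynomial produced by the spatial derivatives, leaving the absorption step unchanged.

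The only real obstacle is purely algebraic: keeping track of the polynomial degrees generated by higher-order derivatives and verifying that they are indeed absorbed correctly by a single (smaller) Gaussian constant $\sigma$, uniformly in $\alpha,\beta$ but with $C$ allowed to depend on $\alpha,\beta,\nu$. Once the elementary bounds on $B$ and $B^{-1}$ are in hand, the rest is a direct computation.
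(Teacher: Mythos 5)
Your argument is correct and is exactly the routine computation the paper alludes to: the paper gives no proof of Proposition \ref{Pr1} beyond the remark that the explicit Gaussian representation "implies" the estimates, and your spectral bounds on $B(t,s)=\int_s^t A(\tau)\,d\tau$, the Leibniz/chain-rule bookkeeping for the polynomial prefactors in $B^{-1}(x-y)$, and the absorption $r^k e^{-\sigma r^2}\le C e^{-\sigma' r^2}$ supply precisely the missing details. The only point worth a footnote is that since $A$ is merely measurable, $\partial_s B=-A(s)$ and hence (\ref{May0'}) hold only for a.e.\ $s$, which suffices for the way the estimate is used later.
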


The following statement is a particular case of a general result on boundedness of singular operators
in Lebesgue spaces with Muckenhoupt weights. For $p=q$ it can be extracted from \cite{F}. However,
we could not find corresponding result for anisotropic spaces and give here a direct proof.

\begin{sats}\label{whole} Let $1<p,q<\infty$, and let
\begin{equation}\label{mu2}
-\frac mp<\mu<m-\frac mp.
\end{equation}
Then the integral operator with the kernel
$\frac{|x'|^\mu}{|y'|^\mu}D_xD_x{\Gamma}(x,y;t,s)$
is bounded in $\widetilde{L}_{p,q}(\mathbb R^n\times\mathbb R)$ and  $L_{p,q}(\mathbb R^n\times\mathbb R)$ spaces.
\end{sats}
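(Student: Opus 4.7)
The strategy is to recognize $T$ as a weighted parabolic Calderón--Zygmund operator and use standard weighted harmonic analysis together with a vector--valued lift for the mixed--norm case.

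The first step is to identify $D_xD_x\Gamma$ as a parabolic CZ kernel. Proposition~\ref{Pr1} (with $|\alpha|+|\beta|=2$ for the size bound, and with one extra derivative for the regularity bound) yields
\[
|D_xD_x\Gamma(x,y;t,s)|\le C(t-s)^{-(n+2)/2}\exp\!\big(-\sigma|x-y|^2/(t-s)\big),
\]
and analogous estimates on its derivatives. Hence, in the parabolic metric $\rho((x,t),(y,s))=|x-y|+\sqrt{t-s}$, the kernel satisfies the standard CZ size/regularity conditions. The weight $|x'|^{\mu p}$, under (\ref{mu2}) (equivalent to $-m<\mu p<m(p-1)$), is the classical Muckenhoupt $A_p$ weight on $\mathbb R^m$; since it depends only on $x'$, its trivial extension to $\mathbb R^n$ lies in $A_p(\mathbb R^n)$, and when extended constantly in $t$ it is a parabolic $A_p$ weight on $\mathbb R^n\times \mathbb R$.

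For the diagonal case $p=q$, the weighted parabolic Calderón--Zygmund theorem (as can be extracted from \cite{F}) immediately gives boundedness of $T$ on $L^p(\mathbb R^n\times\mathbb R;|x'|^{\mu p}dx\,dt)=L_{p,p}=\widetilde L_{p,p}$. For $p\ne q$, I would pass to the anisotropic estimates by a vector-valued argument. For $L_{p,q}$, view $T$ as a convolution in $t-s$ with an operator-valued kernel $\mathcal K(t-s)$ acting on the spatial weighted space $L^p(\mathbb R^n;|x'|^{\mu p}dx)$; the Gaussian bounds and the diagonal estimate produce operator-norm bounds $\|\mathcal K(h)\|+h\,\|\mathcal K'(h)\|\le C/h$, so a scalar CZ theorem in the $t$ variable (acting on $L^q(\mathbb R)$) yields the $L_{p,q}$-estimate. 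For $\widetilde L_{p,q}$, the order of integration is reversed: for each fixed $x$ the operator in $s$ is a scalar singular integral in $t$, and its $L^q(\mathbb R)$-bound (uniform in $x$ after appropriate scaling) gets integrated in $x$ against the $A_p$ weight via a Fefferman--Stein type inequality for $A_p$ weights.

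The main obstacle is the $\widetilde L_{p,q}$-bound. The $p=q$ case and the $L_{p,q}$-case both reduce to weighted CZ theory (scalar in the first, vector-valued in the second variable), but in $\widetilde L_{p,q}$ the spatial $A_p$ weight sits outside the $L^q_t$-norm, so that the natural extrapolation from the diagonal case does not suffice. This forces either an $L^q$-valued weighted CZ theorem or, more likely in the spirit of a ``direct proof'', a dyadic decomposition of $D_xD_x\Gamma$ in the parabolic scale $t-s$ and a careful scale-by-scale estimation that separates the rapidly decaying Gaussian factor from the singular $(t-s)^{-(n+2)/2}$ factor; this is precisely the step where the authors' claim that the result for anisotropic spaces does not follow directly from \cite{F} becomes substantive.
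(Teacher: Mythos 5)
Your proposal is incomplete, and the gap is in the part you yourself flagged as the ``main obstacle.'' For $p=q$ your reasoning is sound: under (\ref{mu2}) the weight $|x'|^{\mu p}$ is a Muckenhoupt $A_p$ weight, $D_xD_x\Gamma$ is a parabolic CZ kernel by Proposition~\ref{Pr1}, and weighted singular integral theory gives the diagonal case. For $L_{p,q}$ with $p\ne q$ an operator-valued CZ argument in the time variable can in principle work, but you give only a sketch. For $\widetilde L_{p,q}$ you do not actually close the argument: your appeal to ``a Fefferman--Stein type inequality'' is vague, and you concede that the natural extrapolation from the diagonal fails. A proof must resolve precisely this point, and yours does not.

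The paper sidesteps the CZ machinery for the weighted part entirely by a decomposition you did not consider. Write the operator as $T = T_0 + {\cal G}$, where $T_0$ has the unweighted kernel $D_xD_x\Gamma$ and ${\cal G}$ has kernel $\bigl(\frac{|x'|^\mu}{|y'|^\mu}-1\bigr)D_xD_x\Gamma$. The operator $T_0$ (the $\mu=0$ case) is already bounded on $L_{p,q}$ by \cite{Kr} and on $\widetilde L_{p,q}$ by \cite{KN}, so it need not be re-proved. The decisive observation about ${\cal G}$ is that the prefactor $\frac{|x'|^\mu}{|y'|^\mu}-1$ vanishes on $\{|x'|=|y'|\}$; combined with the Gaussian factor and the estimates (\ref{May0}), (\ref{May0'}) this yields the pointwise bound (\ref{Apr19}), in which the time singularity is softened from $(t-s)^{-(n+2)/2}$ to $(t-s)^{-(n+2-r)/2}$ with $r=\min\{|\mu|,1\}>0$. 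Thus ${\cal G}$ is \emph{not} a singular integral but a weakly singular kernel, amenable to elementary Schur-type estimates. Proposition~\ref{L_p} then gives boundedness on $L_p$ and on $\widetilde L_{p,\infty}$; generalized Riesz--Thorin interpolation and duality cover all $\widetilde L_{p,q}$. For $L_{p,q}$ the paper checks the hypotheses of the concrete mixed-norm criterion in \cite[Theorem~3.8]{BIN}: the $L_p$ bound supplies one condition, and testing against mean-zero data supported in a thin time layer, together with the $\partial_s$-estimate (\ref{Apr19'}) and Proposition~\ref{L_p_1}, supplies the other. In short, where you attempt to treat the full weighted operator as a CZ operator and run into the anisotropic wall, the paper peels off a known operator and shows the weight-induced correction has extra decay, reducing the problem to positive-kernel estimates where the anisotropy causes no difficulty.
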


\begin{proof}
For $\mu=0$ this statement is proved in \cite{Kr} (for the space $L_{p,q}$) and in \cite{KN}
(for the space $\widetilde{L}_{p,q}$). Thus, it is sufficient to prove boundedness of the operator
with kernel
$$
{\cal G}(x,y;t,s)=\Big(\frac{|x'|^\mu}{|y'|^\mu}-1\Big)D_xD_x{\Gamma}(x,y;t,s).
$$
Using (\ref{May0}), (\ref{May0'}) and elementary estimates we obtain
\begin{equation}\label{Apr19}
|{\cal G}(x,y;t,s)|\le C\,\frac {|x'|^{-r}\cdot\Phi\big(\frac {|x'|}{|y'|}\big)}
{(t-s)^{\frac {n+2-r}2}}\,\exp\left(-\frac{\sigma|x-y|^2}{2(t-s)}\right),
\end{equation}
\begin{equation}\label{Apr19'}
|\partial_s {\cal G}(x,y;t,s)|\le C\,\frac {|x'|^{-r}\cdot\Phi\big(\frac {|x'|}{|y'|}\big)}
{(t-s)^{\frac {n+4-r}2}}\,\exp\left(-\frac{\sigma|x-y|^2}{2(t-s)}\right)
\end{equation}
for $x,y\in\mathbb R^n$ and $s<t$. Here $r=\min\{|\mu|, 1\}$ and
$$
\Phi(t)=
\begin{cases}
t^\mu+t, & \mu>1;\\
t^\mu, & 0<\mu\le 1;\\
1, &-1\le\mu<0;\\
t^{\mu+1}+1, & \mu<-1.
\end{cases}
$$
Due to (\ref{Apr19}) and (\ref{mu2}), the kernels ${\cal G}$ satisfy the conditions of
Proposition \ref{L_p} (see Appendix) with $\lambda_1=-r$, $\lambda_2=0$, $\varepsilon_1=\varepsilon_2=0$.
Thus, the operator ${\cal G}$ is bounded in $L_p(\mathbb R^n\times\mathbb R)$ and 
$\widetilde L_{p,\infty}(\mathbb R^n\times\mathbb R)$. Generalized Riesz--Thorin theorem, see, e.g.,
\cite[1.18.7]{Tr}, shows that this operator is bounded in
$\widetilde L_{p,q}(\mathbb R^n\times\mathbb R)$ for any $q\ge p$.

For $q<p$ the same argument provides boundedness of the operator ${\cal G}^*$
in $L_{p'}(\mathbb R^n\times\mathbb R)$ and  $\widetilde L_{p',\infty}(\mathbb R^n\times\mathbb R)$ spaces,
and thus in $\widetilde L_{p',q'}(\mathbb R^n\times\mathbb R)$. Now duality
argument gives the statement of Theorem for the spaces $\widetilde L_{p,q}$.\medskip

To deal with the scale $L_{p,q}$, we take a function $h$ supported in the layer $|s-s^0|\le\delta$ such
that $\int h(y;s)\, ds\equiv 0$. Then
$$
({\cal G}h)(x;t)=\int\limits_{-\infty}^{t}\int\limits_{\mathbb R^n}
\Bigl({\cal G}(x,y;t, s)-{\cal G}(x,y;t,s^0)\Bigr)\, h(y; s)\, dyds.
$$
For $|s- s^0|<\delta$ and $t- s^0>2\delta$, estimate (\ref{Apr19'}) implies
\begin{eqnarray*}
&&\left|{\cal G}(x,y;t, s)-{\cal G}(x,y;t, s^0)\right|
\le\int\limits_{s^0}^s|\partial_\tau {\cal G}_j(x,y;t,\tau)|\,d\tau\\
&&\le C\,\frac {|x'|^{-r}\cdot\Phi\big(\frac {|x'|}{|y'|}\big)}
{(t-s)^{\frac {n+2-r}2}}\, \frac {\delta} {t- s}\,\exp\left(-\frac{\sigma|x-y|^2}{2(t-s)}\right).
\end{eqnarray*}
Thus, our kernel satisfies the assumptions of Proposition \ref{L_p_1} (see Appendix) with $\varkappa=1$, $\lambda_1=-r$,
$\lambda_2=0$, $\varepsilon_1=\varepsilon_2=0$. This gives
$$
\int\limits_{|t- s^0|>2\delta}\Vert ({\cal G}h)(\cdot; t)\Vert_p\, dt\le C\,\Vert h\Vert_{p,1},
$$
where $C$ does not depend on $\delta$ and $ s^0$.

The last estimate is equivalent to the second condition in \cite[Theorem 3.8]{BIN} while the boundedness
of ${\cal G}$ in $L_p(\mathbb R^n\times \mathbb R)$ gives the first condition in this theorem.
Therefore, Theorem 3.8 \cite{BIN} ensures that ${\cal G}$ is bounded in
$L_{p,q}(\mathbb R^n\times \mathbb R)$ for any  $1<q<p<\infty$. For $q>p$ this statement follows by
duality arguments.\end{proof}

\subsection{Coercive estimates for weak solutions to the Dirichlet
problem in the wedge}\label{WDir}

We recall that $\Gamma^{\cal D}_{\cal K}$ stands for the Green function of the operator ${\cal L}$
in the wedge ${\cal K}$ under the Dirichlet boundary condition, and $\lambda_c^{\pm}>0$ are the
critical exponents for ${\cal L}$ in ${\cal K}$.\medskip

The next statement is proved in \cite[Theorem 3.10]{KN1} for $|\alpha'|, \, |\beta'|\leq 2$.
In general case the proof runs almost without changes.

\begin{prop}\label{Green_D} Let $\lambda^+<\lambda_c^+$ and $\lambda^-<\lambda_c^-$.
For $x,y\in {\cal K}$, $t>s$ the following estimates are valid:
\begin{eqnarray}\label{May3}
&&|D_{x}^{\alpha}D_{y}^{\beta} \Gamma^{\cal D}_{\cal K}(x,y;t,s)|\le
C\,\frac {{\cal R}_{x,t-s}^{\lambda^+-|\alpha'|}}{r_x^{(|\alpha'|-2+\varepsilon)_+}}
\ \frac {{\cal R}_{y,t-s}^{\lambda^--|\beta'|}}{r_y^{(|\beta'|-2+\varepsilon)_+}}
\nonumber\\
&&\times{(t-s)^{-\frac{n+|\alpha|+|\beta|}2}} \,\exp
\left(-\frac{\sigma|x-y|^2}{t-s}\right),
\end{eqnarray}
\begin{eqnarray}\label{May4}
&&|D_{x}^{\alpha}D_{y}^{\beta}\partial_s \Gamma^{\cal D}_{\cal K}(x,y;t,s)|\le
C\,\frac {{\cal R}_{x,t-s}^{\lambda^+-|\alpha'|}}{r_x^{(|\alpha'|-2+\varepsilon)_+}}
\ \frac {{\cal R}_{y,t-s}^{\lambda^--|\beta'|-2}}{r_y^{|\beta'|+\varepsilon}}
\nonumber\\
&&\times{(t-s)^{-\frac {n+|\alpha|+|\beta|+2}2}} \,\exp
\left(-\frac{\sigma|x-y|^2}{t-s}\right),
\end{eqnarray}
where $\sigma$ is a positive constant depending on $\nu$,
$\varepsilon$ is an arbitrary small positive number  and $C$ may
depend on $\nu$, $\lambda^{\pm}$, $\alpha$, $\beta$ and $\varepsilon$.
\end{prop}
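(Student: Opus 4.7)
The plan is to bootstrap from the base case $|\alpha'|,|\beta'|\le 2$, proved in \cite[Theorem 3.10]{KN1}, to arbitrary orders by interior parabolic regularity. The crucial structural fact is that the coefficients $a^{ij}$ depend only on $t$, so if $u$ solves ${\cal L}u=0$ then $D_x^\gamma u$ also solves the same equation for every multi-index $\gamma$; the analogous statement holds for the formal adjoint that governs the dependence on $(y,s)$.

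First I would reduce to estimating $x$-derivatives only: since the adjoint Green function has the form treated in \cite{KN1}, the treatment of $D_y^\beta$ mirrors that of $D_x^\alpha$, and it suffices to explain how to raise $|\alpha|$ beyond the base value. Then I would dispose of $x''$-derivatives: differentiation in $x''\in\mathbb R^{n-m}$ does not interact with $\partial{\cal K}$, so a standard interior Krylov estimate in a parabolic cylinder of radius $\rho\simeq\min(\sqrt{t-s},|x'|)\simeq\sqrt{t-s}\,{\cal R}_{x,t-s}$ produces the factor $(t-s)^{-|\alpha''|/2}$ with no change in the primed exponent. Finally, for $x'$-derivatives of order exceeding two, I would start from $v=D_x^{\alpha_0}D_y^{\beta_0}\Gamma^{\cal D}_{\cal K}$ with $|\alpha_0'|=|\beta_0'|=2$, for which the base estimate applies. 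Then $v$ solves ${\cal L}v=0$ on $Q_\rho(x,t)$ whenever $\rho=c\,d(x)$ is small enough that this cylinder lies in ${\cal K}\times\mathbb R$, and the interior estimate
\[
|D_x^\gamma v(x,t)|\le C\rho^{-|\gamma|}\sup_{Q_\rho(x,t)}|v|
\]
yields the additional factor $\rho^{-|\gamma|}\simeq d(x)^{-|\gamma|}=|x'|^{-|\gamma|}r_x^{-|\gamma|}$. The first power is absorbed by shifting the exponent on ${\cal R}_{x,t-s}$ from $\lambda^+-2$ to $\lambda^+-|\alpha'|$, while $r_x^{-|\gamma|}$ gives the required singular factor. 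Inequality (\ref{May4}) is then derived by the same scheme after rewriting $\partial_s$ in terms of $a^{ij}(s)D_{y_i}D_{y_j}$ via the adjoint equation, so that one $s$-derivative costs two extra $y$-derivatives, consistent with the shift by $2$ in $|\beta'|$ and the extra $(t-s)^{-1}$ pre-factor.

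The main obstacle will be the bookkeeping needed to convert the base bound for $v$ on the whole cylinder $Q_\rho(x,t)$ into the bound at its centre. When $d(x)\le c\sqrt{t-s}$ this is routine, since ${\cal R}_{z,t-s}\simeq{\cal R}_{x,t-s}$ on $Q_\rho$ and the Gaussian $\exp(-\sigma|z-y|^2/(t-s))$ varies by a bounded multiplicative constant; in the complementary regime $\sqrt{t-s}\ll d(x)$ one is far from the boundary and can dominate $\Gamma^{\cal D}_{\cal K}$ by the whole-space Green function and appeal to Proposition \ref{Pr1} directly. The small $\varepsilon$ in $(|\alpha'|-2+\varepsilon)_+$ is the slack absorbed by the interior estimate when the supremum of $v$ is attained at points whose $r_z$ is slightly smaller than $r_x$, and it can be made as small as one wishes at the cost of increasing the constant $C$.
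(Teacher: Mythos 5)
Your bootstrap from the base case is a genuinely different argument from the paper's, which offers no new proof at all: it simply observes that the proof of \cite[Theorem 3.10]{KN1} (there carried out for $|\alpha'|,|\beta'|\le 2$) extends verbatim to higher orders. The core of your proposal is sound. Since the coefficients depend only on $t$, the function $v=D_x^{\alpha_0}D_y^{\beta_0}\Gamma^{\cal D}_{\cal K}$ with $|\alpha_0'|=2$ again solves ${\cal L}v=0$ away from the singularity, the interior bound $|D_x^\gamma v(x,t)|\le C\rho^{-|\gamma|}\sup_{Q_\rho(x,t)}|v|$ applies with $\rho\simeq\min(d(x),\sqrt{t-s})$, and in both regimes the exponent bookkeeping closes: when $\rho\simeq d(x)$ one uses $|x'|+\sqrt{t-s}\ge\sqrt{t-s}$, and when $\rho\simeq\sqrt{t-s}$ one uses ${\cal R}_{x,t-s}\le 1$ and $r_x\le 1$. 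What the paper's route buys is that nothing needs to be re-derived; what yours buys is that the base case can be treated as a black box with no reference to the internals of \cite{KN1}.

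Three clarifications. First, your paragraph on $x''$-derivatives is superfluous: the phrase ``for $|\alpha'|,|\beta'|\le 2$'' in the paper's reference to \cite{KN1} means all $\alpha''$, $\beta''$ are already covered by the base case, so only the primed orders need to be raised. (Your proposed radius $\min(\sqrt{t-s},|x'|)$ would in any event have to be replaced by $\min(\sqrt{t-s},d(x))$, since $d(x)=r_x|x'|$ may be much smaller than $|x'|$.) Second, in the regime $\sqrt{t-s}\ll d(x)$ you cannot ``dominate $\Gamma^{\cal D}_{\cal K}$ by the whole-space Green function'': the comparison principle gives a pointwise bound only for the functions themselves, not for their derivatives. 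This is a non-fatal slip, because the interior estimate with $\rho\simeq\sqrt{t-s}$ already handles that regime as shown above, so the detour is unnecessary. Third, the $\varepsilon$ in $(|\alpha'|-2+\varepsilon)_+$ is not produced by the bootstrap: it is inherited from the base case, where the estimate at $|\alpha'|=2$ already carries $r_x^{-\varepsilon}$. On $Q_\rho(x,t)$ with $\rho\le\frac12\min(d(x),\sqrt{t-s})$ one has $r_z\simeq r_x$ up to an absolute multiplicative constant, not up to a small power of $r_x$, so the interior step does not introduce any further loss in that exponent.
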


Now we turn to problem (\ref{2.6}) and to the proof of Theorem \ref{prop11}. \medskip

{\bf Proof of Theorem \ref{prop11}}.  First, function (\ref{2.8}) obviously solves
problem (\ref{2.6}) in the sence of distributions. Thus,
it is sufficient to prove estimates (\ref{2.9}) and (\ref{2.9a}). Put
$$
{\cal T}_0(x,y;t,s)=\frac{|x'|^{\mu-1}}{|y'|^{\mu+1}}\Gamma^{\cal D}_{\cal K}(x,y;t,s);\quad
{\cal T}_1(x,y;t,s)=\frac{|x'|^{\mu-1}}{|y'|^\mu}D_{y}\Gamma^{\cal D}_{\cal K}(x,y;t,s);
$$
$$
{\cal T}_2(x,y;t,s)=\frac{|x'|^{\mu}}{|y'|^{\mu+1}} D_x\Gamma^{\cal D}_{\cal K}(x,y;t,s);\
{\cal T}_3(x,y;t,s)=\frac{|x'|^\mu}{|y'|^\mu} D_xD_y\Gamma^{\cal D}_{\cal K}(x,y;t,s).
$$
We choose $0<\lambda^+<\lambda_c^+$ and $0<\lambda^-<\lambda_c^-$ such that
\begin{equation}\label{mueps}
2-\frac mp-\lambda^+<\mu<m-\frac mp+\lambda^-.
\end{equation}

\smallskip

(i) By Proposition \ref{Green_D}, the kernels ${\cal T}_0$ and ${\cal T}_1$ satisfy the conditions of
Proposition \ref{L_p} with $\varepsilon_1=\varepsilon_2=0$ and

\noindent with $r=2$, $\lambda_1=\lambda^+-2$, $\lambda_2=\lambda^-$ and $\mu$ replaced by $\mu+1$ for the kernel ${\cal T}_0$;

\noindent with $r=1$, $\lambda_1=\lambda^+-1$, $\lambda_2=\lambda^--1$ for the kernel ${\cal T}_1$, respectively.\smallskip

Similarly to the first part of the proof of Theorem \ref{whole}, using Proposition \ref{L_p} and
generalized Riesz--Thorin theorem, we conclude that the operators ${\cal T}_0$ and ${\cal T}_1$ are bounded
in $\widetilde L_{p,q}(\mathbb R^n\times\mathbb R)$ for $1<p\leq q<\infty$. For $q<p$ we proceed by duality
argument and arrive at
\begin{equation}\label{2.5azz}
|\!|\!||x'|^{\mu-1}u|\!|\!|_{p,q}\leq C(|\!|\!||x'|^{\mu+1}
f_0|\!|\!|_{p,q}+|\!|\!||x'|^\mu {\bf f}|\!|\!|_{p,q}).
\end{equation}
for all $1<p,q<\infty$ and $\mu$ subject to (\ref{mu}).

To estimate the first term in the left-hand side of (\ref{2.9}) we use local estimates.
For  $\xi'' \in\mathbb{R}^{n-m}$, $\rho>0$ and $\vartheta>1$, we define
$${\Pi}_{\rho,\vartheta}(\xi'')= \Big\{x\in{\cal T}\, :\, \frac{\rho}{\vartheta}<
|x'| < \rho,\ |x''-\xi''|<\rho\Big\}.
$$
Localization of the estimate from \cite[Theorem 1 (i)]{KN2} with $\mu=0$ (by using a cut-off function,
which is equal to $1$ on ${\Pi}_{\rho,2}$ and $0$ outside ${\Pi}_{2\rho,8}$) gives
\begin{equation*}\label{2.11}
\int\limits_{{\Pi}_{\rho,2}(\xi)}\Big (\int\limits_{\mathbb R}|Du|^q\,dt\Big)^{\frac pq}dx\leq
C\int\limits_{{\Pi}_{2\rho,8}(\xi)}\Big (\int\limits_{\mathbb R}(|u|^q\rho^{-q}+
\rho^q|f_0|^q+|{\bf f}|^q\,dt\Big )^{\frac pq}dx
\end{equation*}
for any $\xi'' \in{\mathbb R}^{n-m}$ and $\rho>0$.

Using a proper partition of unity in ${\cal K}$, we arrive at
\begin{multline*}
\int\limits_{\cal K}\bigg(\int\limits_{\mathbb R} |Du|^q\, dt\bigg)^{p/q}|x'|^{\mu p}\, dx
\le C\bigg(\ \int\limits_{\cal K}\bigg(\int\limits_{\mathbb R}|u|^q\, dt\bigg)^{p/q}
|x'|^{\mu p-p}\, dx \\
+ \int\limits_{\cal K}\bigg(\int\limits_{\mathbb R}|{\bf f}|^q\, dt\bigg)^{p/q}|x'|^{\mu p}\, dx
+ \int\limits_{\cal K}\bigg(\int\limits_{\mathbb R}|f_0|^q\, dt\bigg)^{p/q}
|x'|^{\mu p+p}\, dx\bigg).
\end{multline*}
This immediately implies (\ref{2.9}) with regard of (\ref{2.5azz}).\medskip

(ii) To deal with the space $L_{p,q}$, we need the following lemma (compare with  the second part of the proof
of Theorem \ref{whole})

\begin{lem}\label{weak_2}
Let a function $h$ be supported in the layer $|s-s^0|\le\delta$ and
satisfy $\int h(y;s)\, ds\equiv 0$. Also let $p\in (1,\infty)$ and
$\mu$ be subject to (\ref{mu}).
Then the operators ${\cal T}_j$, $j=0,1,2,3$, satisfy
$$
\int\limits_{|t- s^0|>2\delta}\Vert ({\cal T}_jh)(\cdot; t)\Vert_p\, dt\le C\,\Vert h\Vert_{p,1},
$$
where $C$ does not depend on $\delta$ and $ s^0$.
\end{lem}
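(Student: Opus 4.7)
The plan is to mirror the second part of the proof of Theorem \ref{whole}: exploit the zero-mean condition on $h$ in the time variable to replace each kernel ${\cal T}_j(x,y;t,s)$ by the difference ${\cal T}_j(x,y;t,s)-{\cal T}_j(x,y;t,s^0)$, estimate this difference by the mean value theorem together with the pointwise bound on $\partial_s\Gamma^{\cal D}_{\cal K}$ supplied by \eqref{May4}, and then invoke Proposition \ref{L_p_1} from the Appendix with $\varkappa=1$.

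Concretely, since $\int h(y;s)\,ds\equiv 0$ and $\mathrm{supp}\,h\subset\{|s-s^0|\le\delta\}$, for $t-s^0>2\delta$ I would write
\[
({\cal T}_j h)(x;t)=\int\limits_{-\infty}^{t}\int\limits_{\cal K}\bigl({\cal T}_j(x,y;t,s)-{\cal T}_j(x,y;t,s^0)\bigr)h(y;s)\,dy\,ds.
\]
For $\tau\in[s^0,s]$ under these restrictions one has $t-\tau\asymp t-s$, so the fundamental theorem of calculus combined with \eqref{May4} gives
\[
\bigl|{\cal T}_j(x,y;t,s)-{\cal T}_j(x,y;t,s^0)\bigr|\le \frac{C\delta}{t-s}\cdot \widetilde K_j(x,y;t,s),
\]
where $\widetilde K_j$ has the same shape as the kernel bound from \eqref{May3} used in part (i), up to shifting the exponent of ${\cal R}_{y,t-s}$ down by $2$ and introducing a factor $r_y^{-\varepsilon}$ (with symmetric modifications on the $x$-side for ${\cal T}_2$ and ${\cal T}_3$).

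With this gain in hand, the modified kernels fit the hypotheses of Proposition \ref{L_p_1} with $\varkappa=1$, $\varepsilon_1=\varepsilon_2=0$, and the same choice of $(r,\lambda_1,\lambda_2)$ as used in part (i) of the proof of Theorem \ref{prop11}, together with the symmetric assignments for ${\cal T}_2,{\cal T}_3$. Absorbing the $-2$ shift in the exponent of ${\cal R}_{y,t-s}$ and the $r_y^{-\varepsilon}$ loss amounts to picking $\lambda^\pm$ slightly smaller than the value taken in part (i), which is still admissible because \eqref{mueps} is an open condition and $\lambda_c^\pm>0$ strictly. The desired estimate then follows directly from Proposition \ref{L_p_1}.

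The main obstacle is the bookkeeping for $\widetilde K_j$: the $s$-derivative does not simply add a clean $(t-s)^{-1}$ factor to \eqref{May3}, because the ${\cal R}_{y,t-s}$-exponent drops by two and an extra $r_y^{-\varepsilon}$ appears. Verifying, kernel by kernel, that these modifications still match the admissible parameter range of Proposition \ref{L_p_1} is the step that does not come for free from the Theorem \ref{whole} argument.
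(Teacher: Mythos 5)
Your overall strategy is right — use the zero-mean condition to subtract ${\cal T}_j(x,y;t,s^0)$, bound the difference via $\partial_s\Gamma^{\cal D}_{\cal K}$, and feed the result into Proposition~\ref{L_p_1} — but the execution has a gap that your own closing paragraph correctly senses without resolving. Applying (\ref{May4}) alone gives a difference kernel with an extra factor $r_y^{-\ell_5}$ where $\ell_5 = |\beta'|+\varepsilon$; for ${\cal T}_1$ and ${\cal T}_3$ this is $r_y^{-(1+\varepsilon)}$, which would force $\varepsilon_2 = 1+\varepsilon \ge 1$ in Proposition~\ref{L_p_1}, violating the hypothesis $\varepsilon_2 < 1-\tfrac1p$ for every $p$. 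Shrinking $\lambda^\pm$ cannot help, because the $r_y$-exponent in (\ref{May4}) is independent of $\lambda^\pm$. So the plan ``$\varkappa=1$, $\varepsilon_1=\varepsilon_2=0$'' simply does not match the kernel you actually produce.

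The missing idea in the paper's proof is to \emph{interpolate} two pointwise bounds on the difference: the (\ref{May4}) bound, which carries the factor $\delta/(t-s)$ but pays with ${\cal R}_{y}^{\ell_2-2}$ and $r_y^{-\ell_5}$, and the (\ref{May3}) bound applied to each term of the difference separately, which has no $\delta$-gain but also no $r_y$-loss and keeps ${\cal R}_y^{\ell_2}$. Raising the first to power $\varepsilon$ and the second to power $1-\varepsilon$ yields
\[
\left|{\cal T}_j(x,y;t, s)-{\cal T}_j(x,y;t, s^0)\right|
\le \frac {C\delta^\varepsilon\,{\cal R}^{\ell_1}_{x} {\cal R}^{\ell_2-2\varepsilon}_{y}|x'|^{\ell_3}}
{(t- s)^{\frac {n+2-r}2+\varepsilon}|y'|^{\ell_4}r_y^{\varepsilon\ell_5}}\,\exp\Big(-\frac {\sigma|x-y|^2}{t- s}\Big),
\]
and now Proposition~\ref{L_p_1} applies with $\varkappa=\varepsilon$ (not $1$), $\varepsilon_1=0$, and $\varepsilon_2 = \varepsilon\ell_5$, which is $\varepsilon(1+\varepsilon)$ for ${\cal T}_1,{\cal T}_3$ and $\varepsilon^2$ for ${\cal T}_0,{\cal T}_2$ — small enough once $\varepsilon$ is chosen small. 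Without this geometric-mean step the argument does not go through.
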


\begin{proof}
By $\int h(y; s)\, ds\equiv 0$, we have
\begin{equation}\label{difference1}
({\cal T}_jh)(x;t)=\int\limits_{-\infty}^{t}\int\limits_{\mathbb R^n}
\Bigl({\cal T}_j(x,y;t, s)-{\cal T}_j(x,y;t,s^0)\Bigr)\, h(y; s)\, dyds
\end{equation}
(we recall that all functions are assumed to be extended by zero outside ${\cal K}$).

We choose $0<\lambda^+<\lambda_c^+$ and $0<\lambda^-<\lambda_c^-$ such that (\ref{mueps}) holds.
Then, for $| s- s^0|<\delta$ and $t- s^0>2\delta$, estimate (\ref{May4})
implies
\begin{eqnarray*}
&&\left|{\cal T}_j(x,y;t, s)-{\cal T}_j(x,y;t, s^0)\right|
\le\int\limits_{s^0}^s|\partial_\tau {\cal T}_j(x,y;t,\tau)|\,d\tau\\
&&\le C\,\frac {{\cal R}^{\ell_1}_{x} {\cal R}^{\ell_2-2}_{y}|x'|^{\ell_3}}
{(t- s)^{\frac {n+2-r}2}|y'|^{\ell_4}r_y^{\ell_5}}\,
 \frac {\delta} {t- s}\, \exp \left(-\frac {\sigma|x-y|^2}{t- s}\right),
\end{eqnarray*}
with the following parameters (here $\varepsilon>0$ is arbitrary small):

\begin{center}
 \begin{tabular}{|c|c|c|c|c|c|c|}
 \hline \hfil Kernel\vphantom{$\left(\frac {b^b}{b^b}\right)^2$}\hfil & $r$ & $\ell_1$ & $\ell_2$ & $\ell_3$ & $\ell_4$ & $\ell_5$ \\
 \hline ${\cal T}_0$ \vphantom{$\left(\frac {b^b}{b^b}\right)^2$}
 & $2$ & $\lambda^+$ & $\lambda^-$ & $\mu-1$ & $\mu+1$ & $\varepsilon$\\
 \hline ${\cal T}_1$ \vphantom{$\left(\frac {b^b}{b^b}\right)^2$}
 & $1$ & $\lambda^+$ & $\lambda^--1$ & $\mu-1$ & $\mu$ & $1+\varepsilon$\\
 \hline ${\cal T}_2$ \vphantom{$\left(\frac {b^b}{b^b}\right)^2$}
 & $1$ & $\lambda^+-1$ & $\lambda^-$ & $\mu$ & $\mu+1$ & $\varepsilon$\\
 \hline ${\cal T}_3$ \vphantom{$\left(\frac {b^b}{b^b}\right)^2$}
 & $0$ & $\lambda^+-1$ & $\lambda^--1$ & $\mu$ & $\mu$ & $1+\varepsilon$\\
 \hline
 \end{tabular}
\end{center}
\smallskip

\noindent On the other hand, estimate (\ref{May3}) leads to
\begin{equation*}
\left|{\cal T}_j(x,y;t, s)-{\cal T}_j(x,y;t, s^0)\right| \le C\,\frac
{{\cal R}^{\ell_1}_{x}{\cal R}^{\ell_2}_{y}|x'|^{\ell_3}}{(t-s)^{\frac {n+2-r}2}|y'|^{\ell_4}}
\, \exp\left(-\frac{\sigma|x-y|^2}{t-s} \right)
\end{equation*}
with the same parameters.

Combination of these estimates gives
\begin{equation*}
\left|{\cal T}_j(x,y;t, s)-{\cal T}_j(x,y;t, s^0)\right|
\le \frac {C\delta^\varepsilon\,{\cal R}^{\ell_1}_{x} {\cal R}^{\ell_2-2\varepsilon}_{y}|x'|^{\ell_3}}
{(t- s)^{\frac {n+2-r}2+\varepsilon}|y'|^{\ell_4}r_y^{\varepsilon\ell_5}}\,\exp \left(-\frac {\sigma|x-y|^2}{t- s}\right).
\end{equation*}
Thus, the kernels in (\ref{difference1}) satisfy the assumptions of Proposition \ref{L_p_1} with $\varkappa=\varepsilon$, $\varepsilon_1=0$ and

with $\lambda_1=\lambda^+-2$, $\lambda_2=\lambda^--2\varepsilon$, $\varepsilon_2=\varepsilon^2$ and $\mu$ replaced by $\mu+1$ for kernels
${\cal T}_0$ and ${\cal T}_2$;

with $\lambda_1=\lambda^+-1$, $\lambda_2=\lambda^--1-2\varepsilon$, $\varepsilon_2=\varepsilon(1+\varepsilon)$ for kernels ${\cal T}_1$ and ${\cal T}_3$,
respectively.\smallskip

\noindent Inequality (\ref{mueps}) becomes (\ref{mu_m}), and the Lemma follows.\end{proof}

\medskip

We continue the proof of the second statement of Theorem \ref{prop11}.
Estimate (\ref{2.9}) for $q=p$ provides boundedness of the
operators ${\cal T}_j$, $j=0,1,2,3$, in $L_p(\mathbb R^n\times \mathbb R)$,
which gives the first condition in \cite[Theorem 3.8]{BIN}. Lemma \ref{weak_2} is equivalent to the
second condition in this theorem. Therefore, Theorem 3.8 \cite{BIN}
ensures that these operators are bounded in $L_{p,q}(\mathbb R^n\times \mathbb R)$ for any
 $1<q<p<\infty$. For $q>p$ this statement follows by duality arguments.
This implies estimate (\ref{2.9a}). The proof is complete.

\section{Oblique derivative problem}\label{Neu}

\subsection{The Green function}

From here on we use the notation $x'=(x_1,\hat{x})$ and assume that the cone $K$ is strictly Lipschitz, i.e. (\ref{LUD1}) and (\ref{LUD2}) are satisfied.

\begin{sats} \label{Th2}
There exists a Green function
$\Gamma_{\cal K}^{\cal N}=\Gamma_{\cal K}^{\cal N}(x,y;t,s)$ of problem
{\rm (\ref{2.6zz})}. Moreover, if
$\lambda^+<\lambda_c^+$ and $\lambda^-<\lambda_c^-$ then
the following estimates are valid 
for arbitrary $x,y\in{\cal K}$, $ t>s$:
\begin{eqnarray}\label{Ap1a}
&&|D^\alpha_xD^\beta_y\Gamma_{\cal K}^{\cal N}(x,y;t,s)|\leq
C\,\frac{{\cal R}_{x,t-s}^{(\lambda^+-|\alpha'|+1)_-}}{r_x^{(|\alpha'|-3+\varepsilon)_+}}
\,\frac{{\cal R}_{y,t-s}^{\lambda^--|\beta'|-1}}{r_y^{(|\beta'|-1+\varepsilon)_+}}
\\
&&
\times\frac{(1-{\cal R}_{x,t-s})^{-\min\{1,(|\alpha'|-2+\varepsilon)_+\}}}{(t-s)^{\frac {n+|\alpha|+|\beta|}2}}
\,\exp \left(-\sigma\,\frac{|\hat{x}-\hat{y}|^2+|x''-y''|^2}{t-s}\right),
\nonumber
\end{eqnarray}
\begin{eqnarray}\label{May34}
&&|D_{x}^{\alpha}D_{y}^{\beta}\partial_s \Gamma_{\cal K}^{\cal N}(x,y;t,s)|\le
C\,\frac{{\cal R}_{x,t-s}^{(\lambda^+-|\alpha'|+1)_-}}{r_x^{(|\alpha'|-3+\varepsilon)_+}}
\,\frac{{\cal R}_{y,t-s}^{\lambda^--|\beta'|-3}}{r_y^{|\beta'|+1+\varepsilon}}
\\
&&
\times\frac{(1-{\cal R}_{x,t-s})^{-\min\{1,(|\alpha'|-2+\varepsilon)_+\}}}{(t-s)^{\frac {n+|\alpha|+|\beta|+2}2}}
\,\exp \left(-\sigma\,\frac{|\hat{x}-\hat{y}|^2+|x''-y''|^2}{t-s}\right).
\nonumber
\end{eqnarray}
Here $\sigma$ is a positive number depending on $\nu$, $\alpha$ and $\beta$,
$\varepsilon$ is an arbitrary small positive number  and $C$ may
depend on $\nu$, $\lambda^{\pm}$, $\Lambda$, $\alpha$, $\beta$ and $\varepsilon$.
\end{sats}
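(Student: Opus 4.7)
My plan is to construct $\Gamma_{\cal K}^{\cal N}$ as a perturbation of the whole-space Green function $\Gamma$ and to bound its derivatives by combining Propositions \ref{Pr1} and \ref{Green_D}. Writing $\Gamma_{\cal K}^{\cal N}(x,y;t,s) = \Gamma(x,y;t,s) + W(x,y;t,s)$, for fixed $(y,s)$ I would require $W$ to solve ${\cal L}_x W = 0$ in ${\cal K}\times(s,\infty)$ with $W|_{t=s}=0$ and oblique boundary condition $D_1 W = - D_1 \Gamma(\cdot, y; \cdot, s)$ on $\partial K \times \mathbb R^{n-m}\times(s,\infty)$. Existence of $W$ would be obtained via a layer-potential representation built from $\Gamma_{\cal K}^{\cal D}$ with an unknown density $\psi$ on $\partial K\times\mathbb R^{n-m}$: the oblique trace then yields a singular integral equation for $\psi$, solvable under the strict Lipschitz hypothesis (\ref{LUD1})--(\ref{LUD2}) by combining the flat half-space results of \cite{KN2} with a localization-and-patching argument near the edge.

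The estimates on $\Gamma_{\cal K}^{\cal N}$ then follow by handling $\Gamma$ and $W$ separately. Proposition \ref{Pr1} takes care of $\Gamma$ directly. For $W$, the layer-potential representation introduces one additional $y$-derivative of $\Gamma_{\cal K}^{\cal D}$, which is why the exponent of ${\cal R}_{y,t-s}$ drops from $\lambda^- - |\beta'|$ in (\ref{May3}) to $\lambda^- - |\beta'| - 1$ in (\ref{Ap1a}), with a corresponding shift in the $r_y$ weight. The change in the $x$-exponent from $\lambda^+ - |\alpha'|$ to $(\lambda^+ - |\alpha'| + 1)_-$ reflects the fact that the oblique boundary condition does not force vanishing at $\partial K$, so $\Gamma_{\cal K}^{\cal N}$ does not decay as one approaches the smooth part of the boundary. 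The new factor $(1 - {\cal R}_{x,t-s})^{-\min\{1,(|\alpha'|-2+\varepsilon)_+\}}$ should appear precisely for $|\alpha'|\ge 2$ and corresponds to the blow-up of higher $x$-derivatives of $W$ at $\partial K$ away from the vertex, with $1-{\cal R}_{x,t-s}\sim\sqrt{t-s}/|x'|$ measuring the parabolic distance from the edge to the point $x$. Estimate (\ref{May34}) for $\partial_s\Gamma_{\cal K}^{\cal N}$ would be obtained by applying the same construction to the adjoint operator $\widehat{\cal L}$ and using the duality $\partial_s\Gamma_{\cal K}^{\cal N}(x,y;t,s)=-\partial_\tau\widehat\Gamma_{\cal K}^{\cal N}(y,x;-s,-t)$.

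The main obstacle I anticipate is the simultaneous control of the two boundary singularities: the edge behavior captured by the ${\cal R}$ factors, and the flat Lipschitz part of $\partial K$ captured by $r_x$, $r_y$ and the new $(1-{\cal R}_{x,t-s})$ factor. The boundary integral equation for $\psi$ has a kernel that must satisfy the relevant weighted bounds uniformly in both regimes; since $\phi$ is only Lipschitz one cannot appeal to smooth pseudo-differential calculus and must instead localize along $\partial K$ by a partition of unity, treat each piece of $\partial K\setminus O$ by reflection across $\{x_1=\phi(\hat x)\}$ together with the flat-boundary results from \cite{KN2}, and patch with the edge behavior supplied by Proposition \ref{Green_D}. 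The matching between the two regimes is delicate because the reflection across the Lipschitz graph does not preserve the operator ${\cal L}$, so one has to absorb the commutator into the integral equation for $\psi$ and verify that the Gaussian factor in (\ref{Ap1a}) survives this procedure with the stated exponent $\sigma$.
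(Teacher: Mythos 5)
Your proposal misses the key structural observation the paper exploits, and the layer-potential route you sketch has gaps that are themselves of the same order of difficulty as the theorem being proved.

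The paper's argument is far more elementary. Because the coefficients $a^{ij}$ depend only on $t$, the operator ${\cal L}$ commutes with $D_1$. Hence if $u$ solves the oblique problem (\ref{2.6zz}), then $v:=D_1 u$ satisfies the \emph{Dirichlet} problem (\ref{2.6}) with $f_0=0$ and ${\bf f}=(f,0,\ldots,0)$, since ${\cal L}v = D_1 f = \mathrm{div}\,{\bf f}$ and $v=D_1u=0$ on $\partial{\cal K}$. The solution formula for the divergence-form Dirichlet problem (Theorem~\ref{prop11}) gives $v$ in terms of $D_{y_1}\Gamma^{\cal D}_{\cal K}$, and integrating in $x_1$ from $x_1$ to $\infty$ recovers $u$, yielding the explicit kernel
\begin{equation*}
\Gamma_{\cal K}^{\cal N}(x,y;t,s)=\int\limits_{x_1}^\infty D_{y_1}
\Gamma_{\cal K}^{\cal D}((\zeta,\hat{x},x''),y;t,s)\,d\zeta .
\end{equation*}
The estimate (\ref{Ap1a}) then follows for $\alpha_1\ge 1$ immediately from $D_{x_1}\Gamma_{\cal K}^{\cal N}=-D_{y_1}\Gamma_{\cal K}^{\cal D}$ and (\ref{May3}); for $\alpha_1=0$ one integrates the pointwise bound (\ref{May3}) in $\zeta$ and invokes Lemma~\ref{Zhut'}, which produces exactly the shift from $\lambda^+-|\alpha'|$ to $(\lambda^+-|\alpha'|+1)_-$ and the factor $(1-{\cal R}_{x,t-s})^{-\min\{1,(|\alpha'|-2+\varepsilon)_+\}}$; and (\ref{May34}) is obtained by expressing $\partial_s\Gamma_{\cal K}^{\cal N}$ through $D_y^2\Gamma_{\cal K}^{\cal N}$ via the adjoint equation in $(y,s)$. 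No boundary integral equation appears, and no new solvability result is needed beyond what was already established for the Dirichlet problem.

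Your proposed decomposition $\Gamma_{\cal K}^{\cal N}=\Gamma+W$ with a layer potential for $W$ leaves the central step — solvability of the boundary integral equation for the density $\psi$ on the Lipschitz cone, uniformly in the non-smooth $t$-dependence — completely open. You acknowledge that reflection across a Lipschitz graph does not preserve ${\cal L}$ and that the commutator must be absorbed and the Gaussian propagation re-verified; this is not a technical footnote but the entire content of the theorem in your formulation. Moreover, your argument does not actually derive the exponent pattern in (\ref{Ap1a}), in particular the threshold structure $(\lambda^+-|\alpha'|+1)_-$ and the precise form of the $(1-{\cal R}_{x,t-s})$ factor; you assert that they ``should appear'' but give no mechanism that produces them. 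Finally, your route to (\ref{May34}) via duality $\partial_s\Gamma_{\cal K}^{\cal N}(x,y;t,s)=-\partial_\tau\widehat\Gamma_{\cal K}^{\cal N}(y,x;-s,-t)$ requires knowing that the Green function of the adjoint oblique problem has the transposed form, which is an additional unproved claim; the paper sidesteps this entirely by using the equation in $(y,s)$ to trade $\partial_s$ for two $y$-derivatives that are already controlled by (\ref{Ap1a}).

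In short: look again at why the oblique direction is $D_1$ and the coefficients are $x$-independent. Differentiating the equation in $x_1$ converts the oblique problem to a known Dirichlet problem, and the Green function is then an explicit $\zeta$-integral of $D_{y_1}\Gamma_{\cal K}^{\cal D}$; the estimates reduce to the one-dimensional integral bound of Lemma~\ref{Zhut'}.
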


\begin{proof} Let $u$ be a solution of problem {\rm (\ref{2.6zz})}.
Then the derivative $D_1u$ obviously satisfies the Dirichlet problem
(\ref{2.6}) with $f_0=0$ and ${\bf f}=(f,0,\ldots,0)$.
Therefore,
$$
D_1u=-\int\limits_{-\infty}^t\int\limits_{\cal K}
D_{y_1}\Gamma_{\cal K}^{\cal D}(x,y;t,s)f(y;s)\,dyds,
$$
and we can write solution to problem (\ref{2.6zz}) as
\begin{equation}\label{2.11ag}
u(x;t)=\int\limits_{-\infty}^t\int\limits_{\cal K}\Gamma_{\cal K}^{\cal N}(x,y;t,s)f(y;s)\,dyds,
\end{equation}
where
\begin{equation}\label{Green_N}
\Gamma_{\cal K}^{\cal N}(x,y;t,s)=\int\limits_{x_1}^\infty D_{y_1}
\Gamma_{\cal K}^{\cal D}((\zeta,\hat{x},x''),y;t,s)\,d\zeta.
\end{equation}

Since $D_{x_1}\Gamma_{\cal K}^{\cal N}(x,y;t,s)=-D_{y_1}\Gamma_{\cal K}^{\cal D}(x,y;t,s)$,
we derive from (\ref{May3}) that
\begin{eqnarray*}
&&|D_{x}^{\alpha}D_{y}^{\beta} D_{x_1}\Gamma_{\cal K}^{\cal N}(x,y;t,s)|\le
C\,\frac{{\cal R}_{x,t-s}^{\lambda^+-|\alpha'|}}{r_x^{(|\alpha'|-2+\varepsilon)_+}}
\ \frac {{\cal R}_{y,t-s}^{\lambda^--|\beta'|-1}}{r_y^{(|\beta'|-1+\varepsilon)_+}}
\nonumber\\
&&\times{(t-s)^{-\frac{n+|\alpha|+|\beta|+1}2}} \,\exp
\left(-\frac{\sigma|x-y|^2}{t-s}\right),
\end{eqnarray*}
which gives (\ref{Ap1a}) for $\alpha_1\geq 1$.

To estimate derivatives with $\alpha_1=0$ we write 
\begin{eqnarray*}
&&|D_{x}^{\alpha}D_{y}^{\beta}\Gamma_{\cal K}^{\cal N}(x,y;t,s)|\le
\int\limits_{x_1}^\infty |D_{x}^{\alpha}D_{y}^{\beta}D_{y_1}
\Gamma_{\cal K}^{\cal D}((\zeta,\hat{x},x''),y;t,s)|\,d\zeta
\nonumber\\
&&\le C\, \frac {{\cal R}_{y,t-s}^{\lambda^--|\beta'|-1}}{r_y^{(|\beta'|-1+\varepsilon)_+}}\,
{(t-s)^{-\frac{n+|\alpha|+|\beta|}2}} \,
\exp\left(-\sigma\frac{|\hat{x}-\hat{y}|^2+|x''-y''|^2}{t-s}\right)
\nonumber\\
&&\times\int\limits_{x_1}^{\infty}\Big(\frac{|\hat{x}|^2+\zeta^2}{|\hat{x}|^2+\zeta^2+(t-s)}\Big)^{\frac{\lambda^+-|\alpha'|}2}
\Big(\frac {|\hat{x}|^2+\zeta^2}{(\zeta-\phi(\hat{x}))^2}\Big)^{\frac{(|\alpha'|-2+\varepsilon)_+}2}\\
&&\times\,\exp\left(-\frac{\sigma|\zeta-y_1|^2}{t-s}\right)\,\frac {d\zeta}{\sqrt{t-s}},
\end{eqnarray*}
where we have used (\ref{Green_N}) and (\ref{May3}).
Now we apply Lemma \ref{Zhut'} (see Appendix) and estimate the last integral by
$$C\,\frac{{\cal R}_{x,t-s}^{(\lambda^+-|\alpha'|+1)_-}}{r_x^{(|\alpha'|-3+\varepsilon)_+}}
\,\Big(\frac{\sqrt{t-s}}{|x'|+\sqrt{t-s}}\Big)^{-\min\{1,(|\alpha'|-2+\varepsilon)_+\}},
$$
which gives (\ref{Ap1a}) for $\alpha_1=0$.

Since $\partial_s\Gamma_{\cal K}^{\cal N}(x,y;t,s)$ can be expressed via $D^2_y\Gamma_{\cal K}^{\cal N}(x,y;t,s)$,
the estimate (\ref{May34}) follows from (\ref{Ap1a}).
\end{proof}

\subsection{Estimates for the difference $\Gamma_{\cal K}^{\cal N}-\Gamma$}

The next representation of the Green function $\Gamma_{\cal K}^{\cal N}$ follows from (\ref{Green_N}):
\begin{equation*}
\Gamma_{\cal K}^{\cal N}(x,y;t,s)=\Gamma(x,y;t,s)+\int\limits_{x_1}^\infty {\cal F}((\zeta,\hat{x},x''),y;t,s)\,d\zeta,
\end{equation*}
where $\Gamma$ is the Green function of the operator ${\cal L}$ in the whole space and
$$
{\cal F}(x,y;t,s)=D_{x_1}\Gamma(x,y;t,s)+D_{y_1}\Gamma^{\cal D}_{\cal K}(x,y;t,s).
$$
The function ${\cal F}$ verifies the Dirichlet problem
\begin{equation*}
\aligned
{\cal L}{\cal F}(x,y;t,s) & = 0 &&\mbox{in} \qquad {\cal K}\times\mathbb R;\\
{\cal F}(x,y;t,s) & = D_{x_1}\Gamma(x,y;t,s) && \mbox{for} \quad
x\in\partial {\cal K}.
\endaligned
\end{equation*}
Solving this problem, we find
$$
{\cal F}(x,y;t,s)=-\int\limits_s^t\int\limits_{\partial\cal K}
{\cal P}(x,z;t,\tau)D_{z_1}\Gamma(z,y;\tau,s)\,dS_z d\tau,
$$
where
$$
{\cal P}(x,z;t,\tau)=n_i(z)a^{ij}(\tau)D_{z_j}\Gamma_{\cal K}^{\cal D}(x,z;t,\tau).
$$
Thus we arrived at the following representation for the difference
$\Gamma_{\cal K}^{\cal N}-\Gamma$:
\begin{eqnarray}\label{difference}
&&\Gamma_{\cal K}^{\cal N}(x,y;t,s)-\Gamma(x,y;t,s)
\\
&&=-\int\limits_{x_1}^\infty
\int\limits_s^t\int\limits_{\partial\cal K} {\cal
P}((\zeta,\hat{x},x''),z;t,\tau)D_{z_1}\Gamma(z,y;\tau,s)\,dS_z
d\tau d\zeta.
\nonumber
\end{eqnarray}

Now we are in the position to prove the main estimate of this Section.

\begin{sats} \label{Th3}
Let $\lambda^+<\lambda_c^+$ and $\lambda^-<\lambda_c^-$. Then
the following estimates are valid for $|\alpha|=2$ and for arbitrary $x,y\in{\cal K}$, $ t>s$:
\begin{eqnarray}\label{Feb23}
&&|D_{x}^{\alpha}D_{y}^{\beta}\big(\Gamma_{\cal K}^{\cal N}(x,y;t,s)-\Gamma(x,y;t,s)\big)|
\\
&&\leq\frac{C\,{\cal R}_{x,t-s}^{(\lambda^+-2)_-}}
{(t-s)^{\frac{n+2+|\beta|}2}}
\exp\Big(-\frac{\widetilde\sigma |x-y|^2}{t-s}\Big)
\nonumber\\
&&\times\bigg[\frac{(t-s)^{\frac 12+\varepsilon}}
{|x'|^{1+\varepsilon}\,r_x^{1+3\varepsilon}\,|y'|^{\varepsilon}\,r_y^{\varepsilon}}
+\frac{{\cal R}_{y,t-s}^{(\lambda^--1)_--1}(t-s)^{\frac{|\beta|}2+1}}
{|x'|\,r_x^{1+2\varepsilon}\,|y'|^{|\beta|+1}\,r_y^{|\beta|+2}}\bigg],
\nonumber
\end{eqnarray}
\begin{eqnarray}\label{Feb23a}
&&|D_{x}^{\alpha}D_{y}^{\beta}\partial_s\big(\Gamma_{\cal K}^{\cal N}(x,y;t,s)-\Gamma(x,y;t,s)\big)|
\\
&&\leq\frac{C\,{\cal R}_{x,t-s}^{(\lambda^+-2)_-}}
{(t-s)^{\frac{n+4+|\beta|}2}}
\exp\Big(-\frac{\widetilde\sigma |x-y|^2}{t-s}\Big)
\nonumber\\
&&\times\bigg[\frac{(t-s)^{\frac 12+\varepsilon}}
{|x'|^{1+\varepsilon}\,r_x^{1+3\varepsilon}\,|y'|^{\varepsilon}\,r_y^{\varepsilon}}
+\frac{{\cal R}_{y,t-s}^{(\lambda^--1)_--1}(t-s)^{\frac{|\beta|}2+2}}
{|x'|\,r_x^{1+2\varepsilon}\,|y'|^{|\beta|+3}\,r_y^{|\beta|+4}}\bigg].
\nonumber
\end{eqnarray}
Here $\widetilde\sigma$ is a positive number depending on $\nu$, $\Lambda$ and $\beta$,
$\varepsilon$ is an arbitrary small positive number and $C$ may
depend on $\nu$, $\lambda^{\pm}$, $\Lambda$, $\beta$ and $\varepsilon$.
\end{sats}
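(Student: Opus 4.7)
The plan is to start from the representation (\ref{difference}) and differentiate it in $x$ and $y$. The two multi-indices $\alpha$ and $\beta$ play asymmetric roles: $y$-derivatives always land on $\Gamma(z,y;\tau,s)$, whereas $x$-derivatives act either on the outer integration limit $\int_{x_1}^\infty$ (when $\alpha_1\ge 1$) or inside the integrand on ${\cal P}((\zeta,\hat x,x''),z;t,\tau)$. Then pointwise bounds on each factor come from Proposition \ref{Pr1} applied to $\Gamma$ and Proposition \ref{Green_D} applied to $\Gamma^{\cal D}_{\cal K}$ hidden inside ${\cal P}$, and the remaining work is to carry out the iterated integrations over $\partial{\cal K}$, over $\tau\in(s,t)$, and (for $\alpha_1=0$) over $\zeta\in(x_1,\infty)$.

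For the case $\alpha_1\ge 1$, a single $D_{x_1}$ removes the $\zeta$-integral and reduces the task to estimating
\[
\int_s^t\!\!\int_{\partial{\cal K}} D_x^{\alpha-e_1}{\cal P}(x,z;t,\tau)\cdot D_y^\beta D_{z_1}\Gamma(z,y;\tau,s)\,dS_z\,d\tau.
\]
For $\alpha_1=0$, I differentiate in $\hat x$ or $x''$ inside the $\zeta$-integral. In both cases I split the $\tau$ interval at the midpoint $(t+s)/2$: on the left half I use the full pointwise bound on $D_{z_1}\Gamma$ and integrate the bound on derivatives of ${\cal P}$, while on the right half the roles are reversed. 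The surface integral is handled by bounding the two Gaussian factors via $|x-y|^2\le 2(|x-z|^2+|z-y|^2)$, pulling out a common $\exp(-\widetilde\sigma|x-y|^2/(t-s))$ and integrating the residual power-type factor over $\partial{\cal K}$ using the Lipschitz parametrization (\ref{LUD1})–(\ref{LUD2}). In the case $\alpha_1=0$ the remaining integral in $\zeta$ has precisely the structure
\[
\int_{x_1}^\infty\Big(\tfrac{|\hat x|^2+\zeta^2}{|\hat x|^2+\zeta^2+(t-s)}\Big)^{\!\!\text{pw}_1}\!\!\Big(\tfrac{|\hat x|^2+\zeta^2}{(\zeta-\phi(\hat x))^2}\Big)^{\!\!\text{pw}_2}\!\!\exp\!\Big(-\tfrac{\sigma|\zeta-y_1|^2}{t-s}\Big)\frac{d\zeta}{\sqrt{t-s}},
\]
already encountered in the proof of Theorem \ref{Th2}, and Lemma \ref{Zhut'} delivers the prefactor ${\cal R}_{x,t-s}^{(\lambda^+-2)_-}$ together with the $r_x$-weight.

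The two terms in the bracket in (\ref{Feb23}) correspond exactly to the two $\tau$-halves above. The first term, with the factor $(t-s)^{1/2+\varepsilon}$ and no ${\cal R}_{y,t-s}$ power, arises from the half where $\tau-s$ is small; there $D_{z_1}\Gamma$ is integrated cleanly and the $r_y^{\varepsilon}$, $|y'|^{\varepsilon}$ weights appear only as a cosmetic loss to produce a tiny power of $(t-s)$ for convergence. The second term arises from the half where $t-\tau$ is small; there ${\cal P}$ is almost singular and one pays ${\cal R}_{y,t-s}^{(\lambda^--1)_--1}$ plus high inverse powers of $|y'|$ and $r_y$ corresponding to the $\beta$-derivatives of $\Gamma$. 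Estimate (\ref{Feb23a}) follows from (\ref{Feb23}) by using the adjoint equation in $(y,s)$: $\partial_s$ of $\Gamma_{\cal K}^{\cal N}-\Gamma$ may be replaced by $-a^{ij}(s)D_{y_i}D_{y_j}$, converting a $\partial_s$-bound into a bound with $|\beta|+2$ derivatives in $y$.

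The main obstacle is the bookkeeping of weights near $\partial{\cal K}$: one must decide precisely which derivative to place on ${\cal P}$ versus on $\Gamma$, and at which end of the $\tau$-interval, so that the $r_x, r_y$ powers produced by Proposition \ref{Green_D} are exactly those appearing in (\ref{Feb23}). A secondary technicality is keeping the Gaussian exponent nonnegative with a uniform constant $\widetilde\sigma$ after combining the two Gaussians in $z$ and integrating over the Lipschitz surface $\partial{\cal K}$; the standard trick of absorbing a fraction of the exponent into polynomial factors via $t^\lambda e^{-ct}\le C e^{-c't}$ must be done uniformly in $\lambda^{\pm}<\lambda_c^{\pm}$, $\Lambda$ and $\beta$.
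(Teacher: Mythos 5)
Your skeleton matches the paper's: start from the representation (\ref{difference}) of $\Gamma^{\cal N}_{\cal K}-\Gamma$ as a $\zeta$-primitive of ${\cal F}$, which itself is a boundary-layer integral of $\cal P$ against $D_{z_1}\Gamma$; estimate via Propositions \ref{Pr1} and \ref{Green_D}; treat $\alpha_1\ge 1$ by killing the $\zeta$-integral and $\alpha_1=0$ by invoking Lemma \ref{Zhut'}; and derive (\ref{Feb23a}) from (\ref{Feb23}) by trading $\partial_s$ for two $y$-derivatives. Those steps are all right. But the core of the proof — the estimate of $D_x^\alpha D_y^\beta{\cal F}$ — diverges from the paper, and the description of your alternative has both a conceptual reversal and substantial gaps.

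The paper does \emph{not} split the $\tau$-interval at the midpoint. After integrating out $z''$, it introduces the dichotomy (\ref{23oktc}) on whether $|z'-y'|\le|y'|$ or not — a split in the $z$-variable, not in $\tau$ — and this is where the two bracketed terms of (\ref{Feb23}) originate: the term carrying ${\cal R}_{y,t-s}^{(\lambda^--1)_--1}$ comes from $z'$ close to $y'$, where ${\cal R}_z\asymp{\cal R}_y$, and the other term from $z'$ far from $y'$, where the Gaussian absorbs the ${\cal R}_z$ weight. The remaining $\hat z$-integral is done by Lemma \ref{Lozenka1}, and the $\tau$-integral is handled \emph{globally} via the substitution $\theta=(t-\tau)/(\tau-s)$ followed by the Macdonald-function identity from \cite[3.471.9]{GR}; the Lipschitz geometry enters through inequalities (\ref{K1}) and (\ref{23feb1}) to reassemble the full Gaussian $\exp(-\widetilde\sigma|x-y|^2/(t-s))$. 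None of these ingredients appears in your plan.

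Moreover, your attribution of the two bracketed terms to the two $\tau$-halves is backwards. When $\tau-s$ is small the Gaussian $\exp(-\sigma|z-y|^2/(\tau-s))$ concentrates $z$ near $y$; the weight ${\cal R}_{z,t-\tau}^{\lambda^--1}$ inside $\cal P$ then transfers to ${\cal R}_y$, so that is the half that produces the ${\cal R}_y$-weighted term, not the "clean" one. When $t-\tau$ is small the $z$-Gaussian from $\cal P$ concentrates $z$ near $x$, producing ${\cal R}_x$-type weights, not ${\cal R}_y$. So even if a midpoint split could be pushed through, your bookkeeping would have to be redone from scratch.

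Finally, the proposal leaves the hardest parts unexecuted: how the $\hat z$-integral of the Gaussian product with the power weight is controlled (this is precisely Lemma \ref{Lozenka1}), how the $\tau$-integral yields exactly the powers $(t-s)^{1/2+\varepsilon}$ and $(t-s)^{|\beta|/2+1}$, and how the specific weights $|x'|^{1+\varepsilon}\,r_x^{1+3\varepsilon}$, $|y'|^{|\beta|+1}\,r_y^{|\beta|+2}$ emerge. "Absorbing a fraction of the exponent into polynomial factors" and "integrating the residual power-type factor over $\partial{\cal K}$" are placeholders for the actual analytic work, which in the paper occupies the bulk of the argument and is where the stated $\varepsilon$-losses are carefully allocated. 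As written this is an outline with a wrong attribution rather than a proof.
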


\begin{proof}
We begin with estimates for $D_{x}^{\alpha}D_{y}^{\beta}{\cal F}$ with arbitrary $\alpha$. Using (\ref{May3}) and (\ref{May0}),
we obtain
\begin{eqnarray*}
&&|D_{x}^{\alpha}D_{y}^{\beta}{\cal F}(x,y;t,s)|\leq C
\int\limits_s^t
\int\limits_{\partial\cal K}\frac{{\cal R}_{x,t-\tau}^{\lambda^+-|\alpha'|}\,r_x^{-(|\alpha'|-2+\varepsilon)_+}\,
{\cal R}_{z,t-\tau}^{\lambda^--1}}
{(t-\tau)^{\frac{n+|\alpha|+1}2}(\tau-s)^{\frac{n+|\beta|+1}2}}
\nonumber\\
&&\times\exp\Big(-\frac{\sigma|x-z|^2}{t-\tau}\Big)
\exp\Big(-\frac{\sigma|z-y|^2}{\tau-s}\Big)\,dS_zd\tau,
\end{eqnarray*}

Taking into account that $z=(\phi(\hat{z}),\hat{z},z'')$ and integrating with respect to $z''$, we arrive at
\begin{eqnarray}\label{23okt}
&&|D_{x}^{\alpha}D_{y}^{\beta}{\cal F}(x,y;t,s)|\leq \frac {C\,r_x^{-(|\alpha'|-2+\varepsilon)_+}}
{(t-s)^{\frac{n-m}2}}
\nonumber\\
&&\times\int\limits_s^t\frac{{\cal R}_{x,t-\tau}^{\lambda^+-|\alpha'|}}
{(t-\tau)^{\frac{m+|\alpha|+1}2}}\,
\int\limits_{\mathbb R^{m-1}}\exp\Big(-\sigma\,\frac{|\hat{x}-\hat{z}|^2+|x_1-\phi(\hat{z})|^2}{t-\tau}\Big)
\nonumber\\
&&\times\frac{{\cal R}_{z,t-\tau}^{\lambda^--1}}{(\tau-s)^{\frac{m+|\beta|+1}2}}\,
\exp\Big(-\frac{\sigma|z'-y'|^2}{\tau-s}\Big)\,d\hat{z}d\tau.
\end{eqnarray}

Let us estimate the expression in the last line of (\ref{23okt}). If $|z'-y'|\le |y'|$
then $|z'|\le 2|y'|$ and therefore
${\cal R}_{z,t-\tau}^{\lambda^--1}\le{\cal R}_{y,t-\tau}^{(\lambda^--1)_-}$. In the opposite case
we have $|z'-y'|^2\ge\frac 12 (|\hat{z}-\hat{y}|^2+|y'|^2)$. In any case we obtain
\begin{eqnarray}\label{23oktc}
{\cal R}_{z,t-\tau}^{\lambda^--1}\exp\Big(-\frac{\sigma|z'-y'|^2}{\tau-s}\Big)
&\le & {\cal R}_{y,t-\tau}^{(\lambda^--1)_-}
\exp\Big(-\sigma\,\frac{|\hat{z}-\hat{y}|^2+|y_1-\phi(\hat{z})|^2}{\tau-s}\Big)
\nonumber\\
&+& {\cal R}_{z,t-\tau}^{\lambda^--1}
\exp\Big(-\sigma\,\frac{|\hat{z}-\hat{y}|^2+|y'|^2}{\tau-s}\Big).
\end{eqnarray}

Next, we claim that the following inequality
\begin{equation}\label{K1}
|\hat{x}-\hat{z}|^2+c|x_1-\phi(\hat{z})|^2\geq
(1-c\Lambda^2)|\hat{x}-\hat{z}|^2+\frac{c}{2}|x_1-\phi(\hat{x})|^2,
\end{equation}
holds for positive $c$. Indeed, we note that
$$
|x_1-\phi(\hat{x})|^2\leq
2|x_1-\phi(\hat{z})|^2+2|\phi(\hat{x})-\phi(\hat{z})|^2\leq
2|x_1-\phi(\hat{z})|^2+2\Lambda^2|\hat{x}-\hat{z}|^2,
$$
which implies
\begin{equation*}
|x_1-\phi(\hat{z})|^2\geq
\frac{1}{2}|x_1-\phi(\hat{x})|^2-\Lambda^2|\hat{x}-\hat{z}|^2
\end{equation*}
and leads to (\ref{K1}).

Using (\ref{23oktc}) and (\ref{K1}), we estimate the integral in (\ref{23okt}) by
$$\int\limits_s^t\frac{{\cal R}_{x,t-\tau}^{\lambda^+-|\alpha'|}}
{(t-\tau)^{\frac{m+|\alpha|+1}2}(\tau-s)^{\frac{m+|\beta|+1}2}}\,
\exp\Big(-\frac{\sigma_1|x_1-\phi(\hat{x})|^2}{t-\tau}\Big)
\cdot\Big({\cal I}_1+{\cal I}_2\Big)\,d\tau,
$$
where
\begin{eqnarray}\label{23okta}
&&
{\cal I}_1={\cal R}_{y,t-\tau}^{(\lambda^--1)_-}
\exp\Big(-\frac{\sigma_1|y_1-\phi(\hat{y})|^2}{\tau-s}\Big)\\
&&\times\int\limits_{\mathbb R^{m-1}}
\exp\Big(-\frac{\sigma_1|\hat{x}-\hat{z}|^2}{t-\tau}\Big)
\exp\Big(-\frac{\sigma_1|\hat{z}-\hat{y}|^2}{\tau-s}\Big)\,d\hat{z}.
\nonumber
\end{eqnarray}
\begin{eqnarray}\label{23oktd}
&&
{\cal I}_2=\exp\Big(-\frac{\sigma_1|y'|^2}{\tau-s}\Big)\\
&&\times\int\limits_{\mathbb R^{m-1}}
\Big(\frac{|\hat{z}|}{|\hat{z}|+\sqrt{t-\tau}}\Big)^{\lambda^--1}
\exp\Big(-\frac{\sigma_1|\hat{x}-\hat{z}|^2}{t-\tau}\Big)
\exp\Big(-\frac{\sigma_1|\hat{z}-\hat{y}|^2}{\tau-s}\Big)\,d\hat{z}.
\nonumber
\end{eqnarray}

Applying Lemma \ref{Lozenka1} (see Appendix) for estimating
the right-hand side of (\ref{23okta}) and (\ref{23oktd}), we get
\begin{eqnarray}\label{23oktb}
&&|D_{x}^{\alpha}D_{y}^{\beta}{\cal F}(x,y;t,s)|\leq
\frac{C\,r_x^{-(|\alpha'|-2+\varepsilon)_+}}{(t-s)^{\frac{n-1}2}}
\,\exp\Big(-\sigma_1\,\frac{|\hat{x}-\hat{y}|^2+|x''-y''|^2}{t-s}\Big)
\nonumber\\
&&\times\int\limits_s^t\frac{{\cal R}_{x,t-\tau}^{\lambda^+-|\alpha'|}}
{(t-\tau)^{1+\frac{|\alpha|}2}(\tau-s)^{1+\frac{|\beta|}2}}
\,\exp\Big(-\frac{\sigma_1|x_1-\phi(\hat{x})|^2}{t-\tau}\Big)
\\
&&\times\bigg[\Big(\frac {\tau-s}{t-s}\Big)^{\frac{\varrho}2}
\exp\Big(-\frac{\sigma_1|y'|^2}{\tau-s}\Big)
+{\cal R}_{y,t-\tau}^{\varrho}
\exp\Big(-\frac{\sigma_1|y_1-\phi(\hat{y})|^2}{\tau-s}\Big)\bigg]\,d\tau,
\nonumber
\end{eqnarray}
where $\varrho=(\lambda^--1)_-$.

Now we note that ${\cal R}_{x,t-\tau}^{\lambda^+-|\alpha'|}\le {\cal R}_{x,t-s}^{(\lambda^+-|\alpha'|)_-}$,
${\cal R}_{y,t-\tau}^{\varrho}\le {\cal R}_{y,t-s}^{\varrho}$,
and make the following change of variables in the right-hand side of (\ref{23oktb}):
\begin{equation*}
\theta=\frac{t-\tau}{\tau-s}, \qquad \frac {d\tau}{d\theta}=-\frac{t-s}{(\theta+1)^2}.
\end{equation*}
This gives
\begin{eqnarray}\label{14feb3}
&&|D_{x}^{\alpha}D_{y}^{\beta}{\cal F}(x,y;t,s)|\leq
\frac{C\,{\cal R}_{x,t-s}^{(\lambda^+-|\alpha'|)_-}\,r_x^{-(|\alpha'|-2+\varepsilon)_+}}
{(t-s)^{\frac{n+|\alpha|+|\beta|+1}2}}
\\
&&\times\exp\Big(-\sigma_2\,
\frac{|\hat{x}-\hat{y}|^2+|x''-y''|^2+|x_1-\phi(\hat{x})|^2+|y_1-\phi(\hat{y})|^2}{t-s}\Big)
\nonumber\\
&&\times\int\limits_0^{\infty}d\theta\,\exp\Big(-\frac{\sigma_1|x_1-\phi(\hat{x})|^2}
{(t-s)\theta}\Big)
\frac{(\theta+1)^{\frac{|\alpha|+|\beta|}2}}{\theta^{1+\frac{|\alpha|}2}}
\nonumber\\
&&\times\bigg[(\theta+1)^{-\frac{\varrho}2}\!\exp\Big(-\frac{\sigma_1|y'|^2\theta}{t-s}\Big)+
{\cal R}_{y,t-s}^{\varrho} \exp\Big(-\frac{\sigma_1|y_1-\phi(\hat{y})|^2\theta}{t-s}\Big)\bigg]
\nonumber
\end{eqnarray}
(we take into account that $|y_1-\phi(\hat{y})|\le C |y'|$).

Using formula
$$\int\limits_0^{\infty}\theta^{\kappa-1}\exp(-A\theta-B\theta^{-1})\,d\theta=
2(BA^{-1})^{\frac{\kappa}2}\cdot {\bf K}_{\kappa}(2\sqrt{AB})
$$
(see, e.g., \cite[3.471.9]{GR}) and estimates of the Macdonald function (\cite[8.407 and 8.451.6]{GR})
$${\bf K}_{\kappa}(\zeta)\le C(\kappa,\gamma)\, \zeta^{-(|\kappa|+\gamma)}, \qquad\qquad \forall\zeta,\gamma>0,
$$
we estimate the integral in (\ref{14feb3}) by
\begin{eqnarray}\label{23feb}
&&C\,\bigg[\frac{|x_1-\phi(\hat{x})|^{-\varepsilon}|y_1-\phi(\hat{y})|^{-\varepsilon}}{(t-s)^{-\varepsilon}}
\,\Big(\frac{|x_1-\phi(\hat{x})|^2}{t-s}\Big)^{-\frac{|\alpha|}2}
\\
&&+\frac{|x_1-\phi(\hat{x})|^{-\gamma}|y_1-\phi(\hat{y})|^{-\gamma}}{(t-s)^{-\gamma}}
\,\bigg[\Big(\frac{|y'|^2}{t-s}\Big)^{\frac{\varrho-|\beta|}2}
+{\cal R}_{y,t-s}^{\varrho}\Big(\frac{|y_1-\phi(\hat{y})|^2}{t-s}\Big)^{-\frac{|\beta|}2}\bigg]\bigg]
\nonumber\\
&&\le C\,\bigg[\frac{(t-s)^{\frac{|\alpha|}2+\varepsilon}}
{|x'|^{|\alpha|+\varepsilon}\,r_x^{|\alpha|+\varepsilon}\,|y'|^{\varepsilon}\,r_y^{\varepsilon}}+
\frac{{\cal R}_{y,t-s}^{\varrho}(t-s)^{\frac{|\beta|}2+\gamma}}
{|x'|^{\gamma}\,r_x^{\gamma}\,|y'|^{|\beta|+\gamma}\,r_y^{|\beta|+\gamma}}\bigg].
\nonumber
\end{eqnarray}

Next, we note that
$$
\aligned
|x_1-y_1|^2
&\leq 3|x_1-\phi(\hat{x})|^2+3|\phi(\hat{x})-\phi(\hat{y})|^2+3|\phi(\hat{y})-y_1|^2\\
&\leq 3|x_1-\phi(\hat{x})|^2+3|y_1-\phi(\hat{y})|^2+3\Lambda^2|\hat{x}-\hat{y}|^2,
\endaligned
$$
which implies
\begin{equation}\label{23feb1}
|\hat{x}-\hat{y}|^2+|x_1-\phi(\hat{x})|^2+|y_1-\phi(\hat{y})|^2\ge
\frac{1}{3(1+\Lambda^2)}\,|x'-y'|^2.
\end{equation}

For $|\alpha|=1$ we derive from (\ref{14feb3}), (\ref{23feb}) and (\ref{23feb1}) that
\begin{eqnarray*}
&&|D_{x}^{\alpha}D_{y}^{\beta}D_{x_1}\big(\Gamma_{\cal K}^{\cal N}(x,y;t,s)-\Gamma(x,y;t,s)\big)|
\\
&&\leq\frac{C\,{\cal R}_{x,t-s}^{(\lambda^+-1)_-}
}
{(t-s)^{\frac{n+2+|\beta|}2}}
\exp\Big(-\frac{\widetilde\sigma |x-y|^2}{t-s}\Big)
\\
&&\times\bigg[\frac{(t-s)^{\frac 12+\varepsilon}}
{|x'|^{1+\varepsilon}\,r_x^{1+\varepsilon}\,|y'|^{\varepsilon}\,r_y^{\varepsilon}}+
\frac{{\cal R}_{y,t-s}^{(\lambda^--1)_-}(t-s)^{\frac{|\beta|}2+\gamma}}
{|x'|^{\gamma}\,r_x^{\gamma}\,|y'|^{|\beta|+\gamma}\,r_y^{|\beta|+\gamma}}\bigg].
\end{eqnarray*}
Setting $\gamma=1$, we obtain (\ref{Feb23}) for $\alpha_1\ge1$.

To estimate derivatives for $|\alpha|=2$, $\alpha_1=0$, we use  (\ref{difference}), (\ref{14feb3})
and (\ref{23feb}) and write 
\begin{eqnarray*}
&&|D_{x}^{\alpha}D_{y}^{\beta}\big(\Gamma_{\cal K}^{\cal N}(x,y;t,s)-\Gamma(x,y;t,s)\big)|
\\
&&\le\int\limits_{x_1}^\infty |D_{x}^{\alpha}D_{y}^{\beta}
{\cal F}((\zeta,\hat{x},x''),y;t,s)|\,d\zeta
\nonumber\\
&&\le\frac{C}{(t-s)^{\frac{n+2+|\beta|}2}}\,\cdot\Big({\cal J}_1+{\cal J}_2\Big)
\\
&&\times\exp\Big(-\sigma_2\,
\frac{|\hat{x}-\hat{y}|^2+|x''-y''|^2+|x_1-\phi(\hat{x})|^2+|y_1-\phi(\hat{y})|^2}{t-s}\Big),
\nonumber\\
&&\le\frac{C}{(t-s)^{\frac{n+2+|\beta|}2}}\,\cdot\Big({\cal J}_1+{\cal J}_2\Big)
\exp\Big(-\frac{\widetilde\sigma |x-y|^2}{t-s}\Big)
\end{eqnarray*}
(the last inequality follows from (\ref{23feb1})), where
\begin{eqnarray*}
&{\cal J}_1&=\frac{(t-s)^{\frac{\varepsilon}2}}{|y'|^{\varepsilon}\,r_y^{\varepsilon}}
\int\limits_{x_1}^{\infty}
\exp\left(-\frac{\sigma|\zeta-x_1|^2}{t-s}\right)\,
\Big(\frac{|\hat{x}|^2+\zeta^2}{|\hat{x}|^2+\zeta^2+(t-s)}\Big)^{\frac{(\lambda^+-2)_-}2}
\\
&&\times\,
\Big(\frac {|\hat{x}|^2+\zeta^2}{(\zeta-\phi(\hat{x}))^2}\Big)^{\frac{\varepsilon}2}
\Big(\frac {t-s}{(\zeta-\phi(\hat{x}))^2}\Big)^{\frac{1+\varepsilon}2}
\frac {d\zeta}{\sqrt{t-s}},
\end{eqnarray*}
\begin{eqnarray*}
&{\cal J}_2&=\frac{{\cal R}_{y,t-s}^{(\lambda^--1)_-}(t-s)^{\frac{|\beta|+\gamma}2}}
{|y'|^{|\beta|+\gamma}\,r_y^{|\beta|+\gamma}}
\int\limits_{x_1}^{\infty}
\Big(\frac{|\hat{x}|^2+\zeta^2}{|\hat{x}|^2+\zeta^2+(t-s)}\Big)^{\frac{(\lambda^+-2)_-}2}\\
&&\times
\Big(\frac {|\hat{x}|^2+\zeta^2}{(\zeta-\phi(\hat{x}))^2}\Big)^{\frac{\varepsilon}2}
\Big(\frac {t-s}{(\zeta-\phi(\hat{x}))^2}\Big)^{\frac{\gamma}2}
\exp\left(-\frac{\sigma|\zeta-x_1|^2}{t-s}\right)\,\frac {d\zeta}{\sqrt{t-s}}.
\end{eqnarray*}
We set $\gamma=2$ and estimate these integrals by using Lemma \ref{Zhut'}. This again gives (\ref{Feb23}).

Since $\partial_s(\Gamma_{\cal K}^{\cal N}-\Gamma)$ can be expressed via $D^2_y(\Gamma_{\cal K}^{\cal N}-\Gamma)$,
 estimate (\ref{Feb23a}) follows from (\ref{Feb23}).
\end{proof}

\subsection{Coercive estimates in $\widetilde{L}_{p,q}$ and  $L_{p,q}$}\label{Sect3.3}

\begin{lem}\label{prop12} Let $\lambda_c^{\pm}$ be the critical exponents and $1<p,q<\infty$. Let also $\mu$ be subject to {\rm (\ref{mu1})}.
Then the integral operators with kernels
$$
{\cal T}_4(x,y;t,s)=\frac{|x'|^\mu}{|y'|^\mu}D_xD_x\big({\Gamma}_{\cal K}^{\cal N}(x,y;t,s)-{\Gamma}(x,y;t,s)\big)
$$
are bounded in $\widetilde{L}_{p,q}(\mathbb R^n\times\mathbb R)$.
\end{lem}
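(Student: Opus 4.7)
The plan is to follow closely the first part of the proof of Theorem \ref{prop11}(i), with the Dirichlet-Green estimates of Proposition \ref{Green_D} replaced by the Green-difference estimate (\ref{Feb23}) of Theorem \ref{Th3} specialized to $|\alpha|=2$, $|\beta|=0$.

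First I would split ${\cal T}_4 = {\cal T}_4^{(1)}+{\cal T}_4^{(2)}$ according to the two summands inside the square bracket of (\ref{Feb23}), so that, after multiplying by the weight $|x'|^\mu/|y'|^\mu$, each piece is dominated by $\exp(-\widetilde\sigma|x-y|^2/(t-s))$ times a pointwise combination of $|x'|^?$, $|y'|^?$, $r_x^?$, $r_y^?$, ${\cal R}_{x,t-s}^?$, ${\cal R}_{y,t-s}^?$ and a $(t-s)$-power of the form required by Proposition \ref{L_p}. The first piece carries the $(t-s)^{1/2+\varepsilon}$-improvement and the boundary singularity $r_x^{-(1+3\varepsilon)}r_y^{-\varepsilon}$, while the second one carries the radial decay factor ${\cal R}_{y,t-s}^{(\lambda^--1)_--1}$ and the stronger boundary singularity $r_x^{-(1+2\varepsilon)}r_y^{-2}$.

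Next, I would verify for each piece the hypotheses of Proposition \ref{L_p} from the Appendix, choosing $\lambda^\pm<\lambda_c^\pm$ arbitrarily close to the critical exponents and $\varepsilon>0$ small. The quintuple $(r,\lambda_1,\lambda_2,\varepsilon_1,\varepsilon_2)$ in that proposition is read off directly from the exponents appearing in the above bound: roughly $r=1+\varepsilon$, $\lambda_1=(\lambda^+-2)_-$, $\lambda_2=0$, $\varepsilon_1=1+3\varepsilon$, $\varepsilon_2=\varepsilon$ for ${\cal T}_4^{(1)}$ (with $\mu$ shifted by $\varepsilon$), and $r=1$, $\lambda_1=(\lambda^+-2)_-$, $\lambda_2=(\lambda^--1)_--1$, $\varepsilon_1=1+2\varepsilon$, $\varepsilon_2=2$ for ${\cal T}_4^{(2)}$ (with $\mu$ shifted by $1$). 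The admissibility range that Proposition \ref{L_p} imposes on $\mu$ then reduces, in the limits $\varepsilon\to 0$ and $\lambda^\pm\nearrow\lambda_c^\pm$, exactly to (\ref{mu1}); the correction term $(1-\lambda_c^\pm)_+$ appears precisely because $(\lambda^+-2)_-$ and $(\lambda^--1)_- -1$ are nonzero only when $\lambda_c^\pm<1$. This gives boundedness of each ${\cal T}_4^{(i)}$ in $L_p(\mathbb R^n\times\mathbb R)$ and in $\widetilde L_{p,\infty}(\mathbb R^n\times\mathbb R)$.

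Finally, the generalized Riesz--Thorin theorem (as in the proofs of Theorems \ref{whole} and \ref{prop11}) upgrades this to boundedness in $\widetilde L_{p,q}(\mathbb R^n\times\mathbb R)$ for all $q\geq p$, and the case $q<p$ follows by duality: the adjoint kernel satisfies the analogous pointwise estimate with the roles of $x$ and $y$ (and of $\lambda_c^+$ and $\lambda_c^-$) swapped, so the same argument gives boundedness of ${\cal T}_4^*$ in $\widetilde L_{p',q'}$ for $q'\geq p'$. The main obstacle is the parameter bookkeeping in the middle step — verifying that the two chosen quintuples lie within the hypotheses of Proposition \ref{L_p} under the single assumption (\ref{mu1}), and that the resulting ranges are sharp — together with keeping control of the $\varepsilon$-losses when passing from the pointwise bound (\ref{Feb23}) to the exact exponents needed for applying Proposition \ref{L_p}.
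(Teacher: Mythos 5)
Your plan goes wrong at the central step. If you read the parameters of Proposition \ref{L_p} straight off the two terms in (\ref{Feb23}) with $|\alpha|=2$, $|\beta|=0$, you get boundary-singularity exponents $\varepsilon_1 = 1+3\varepsilon$ and $\varepsilon_2=\varepsilon$ for the first piece, and $\varepsilon_1=1+2\varepsilon$, $\varepsilon_2=2$ for the second. But Proposition \ref{L_p} requires $0\le\varepsilon_1<\tfrac1p$ and $0\le\varepsilon_2<1-\tfrac1p$, and each of these upper bounds is at most $1$; the values $1+3\varepsilon$, $1+2\varepsilon$ and $2$ are therefore all out of range, so the proposition cannot be invoked on the pieces you defined. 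Independently, for the second piece your $\lambda_1+\lambda_2=(\lambda^+-2)_-+(\lambda^--1)_- -1$ can drop below $-m$ (e.g.\ $m=2$ with both $\lambda^\pm$ small), violating another hypothesis. So the ``parameter bookkeeping'' you flag as the main obstacle is not just a nuisance — your chosen quintuples simply do not satisfy the proposition.

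The missing idea is that (\ref{Feb23}) alone is too singular near $\partial{\cal K}$ and must be \emph{interpolated} with a second, weaker pointwise bound that has no $r_x,r_y$ singularities at all. That second bound is (\ref{Feb4}), obtained by combining (\ref{May0}) (whole-space Green function) with (\ref{Ap1a}) for $|\alpha|=2$, $|\beta|=0$; it carries only ${\cal R}$-type factors and a small loss $(t-s)^{\varepsilon/2}$. The paper's proof then takes a geometric mean of (\ref{Feb4}) with each bracket term of (\ref{Feb23}), producing the two summands in (\ref{sum}). After this interpolation the $r_x,r_y$ powers shrink to $O(\varepsilon)$ and $O(\varepsilon^2)$, so that, once $\varepsilon$ is chosen smaller than $\tfrac14\min\{\tfrac1p,1-\tfrac1p\}$, all the hypotheses of Proposition \ref{L_p} (including $\lambda_1+\lambda_2>-m$) are met under (\ref{mueps1}), which is the quantitative version of (\ref{mu1}). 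Your closing steps — Riesz--Thorin for $q\ge p$ and duality for $q<p$ — are fine and coincide with the paper's, but they presuppose the $L_p$ and $\widetilde L_{p,\infty}$ bounds that, without the interpolation, you have not actually established.
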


\begin{proof}
First, we choose $0<\lambda^+<\lambda_c^+$, $0<\lambda^-<\lambda_c^-$ and $0<\varepsilon<\frac 14\min\{\frac 1p,1-\frac 1p\}$ such that
\begin{equation}\label{mueps1}
-\frac mp+(1-\lambda^+)_++3\varepsilon<\mu<m-\frac mp-(1-\lambda^-)_+-2\varepsilon.
\end{equation}

Using (\ref{May0}) and (\ref{Ap1a}) with $|\alpha|=2$, $|\beta|=0$, we obtain
\begin{eqnarray}\label{Feb4}
&&|{\cal T}_4(x,y;t,s)|\leq
C\,\frac{{\cal R}_{x,t-s}^{(\lambda^+-1)_--\varepsilon}\,{\cal R}_{y,t-s}^{(\lambda^--1)_-}\,|x'|^{\mu+\varepsilon}}
{(t-s)^{\frac {n+2+\varepsilon}2}\,|y'|^{\mu}}
\\
&&\times\exp \left(-\sigma\,\frac{|\hat{x}-\hat{y}|^2+|x''-y''|^2}{t-s}\right),
\nonumber
\end{eqnarray}
while  estimate (\ref{Feb23}) with $|\beta|=0$ gives
\begin{eqnarray*}
&&|{\cal T}_4(x,y;t,s)|\leq\frac{C\,{\cal R}_{x,t-s}^{(\lambda^+-2)_-}\,|x'|^{\mu}}
{(t-s)^{\frac{n+2}2}\,|y'|^{\mu}}
\exp\Big(-\frac{\widetilde\sigma |x-y|^2}{t-s}\Big)
\\
&&\times\bigg[\frac{(t-s)^{\frac 12+\varepsilon}}
{|x'|^{1+\varepsilon}\,r_x^{1+3\varepsilon}\,|y'|^{\varepsilon}\,r_y^{\varepsilon}}
+\frac{{\cal R}_{y,t-s}^{(\lambda^--1)_--1}(t-s)}
{|x'|\,r_x^{1+2\varepsilon}\,|y'|\,r_y^2}\bigg].
\end{eqnarray*}
Combination of these estimates gives
\begin{eqnarray}\label{sum}
&&|{\cal T}_4(x,y;t,s)|\leq C\,\frac{{\cal R}_{x,t-s}^{(\lambda^+-1)_--2\varepsilon}\,{\cal R}_{y,t-s}^{(\lambda^--1)_-}
\,|x'|^{\mu-\varepsilon^2}}
{(t-s)^{\frac{n+2-2\varepsilon^2}2}\,|y'|^{\mu+\varepsilon^2}\,r_x^{\varepsilon+3\varepsilon^2}\,r_y^{\varepsilon^2}}
\exp\Big(-\frac{\varepsilon\widetilde\sigma |x-y|^2}{t-s}\Big)
\nonumber\\
&&+C\,\frac{{\cal R}_{x,t-s}^{(\lambda^+-1)_--2\varepsilon}\,{\cal R}_{y,t-s}^{(\lambda^--1)_--\varepsilon}
\,|x'|^{\mu}}
{(t-s)^{\frac{n+2-\varepsilon}2}\,|y'|^{\mu+\varepsilon}\,r_x^{\varepsilon+2\varepsilon^2}\,r_y^{2\varepsilon}}
\exp\Big(-\frac{\varepsilon\widetilde\sigma |x-y|^2}{t-s}\Big).
\end{eqnarray}

The summands in (\ref{sum}) satisfy the conditions of Proposition \ref{L_p}

\noindent with $r=2\varepsilon^2$, $\lambda_1=-(1-\lambda^+)_+-2\varepsilon(1+\varepsilon)$, $\lambda_2=-(1-\lambda^-)_+$,
$\varepsilon_1=\varepsilon(1+3\varepsilon)$, $\varepsilon_2=\varepsilon^2$ and $\mu$ replaced by $\mu+\varepsilon^2$ for the
first summand;

\noindent with $r=\varepsilon$, $\lambda_1=-(1-\lambda^+)_+-3\varepsilon$, $\lambda_2=-(1-\lambda^-)_+-\varepsilon$,
$\varepsilon_1=\varepsilon(1+2\varepsilon)$, $\varepsilon_2=2\varepsilon$ and $\mu$ replaced by $\mu+\varepsilon$ for the
second summand, respectively.\smallskip

\noindent Note that the relation $\lambda_1+\lambda_2>-m$ holds in both cases due to (\ref{mueps1}).

Similarly to the first part of the proof of Theorem \ref{whole}, using Proposition \ref{L_p} and
generalized Riesz--Thorin theorem, we conclude that the operators ${\cal T}_4$ are bounded in
$\widetilde L_{p,q}(\mathbb R^n\times\mathbb R)$ for $1<p\leq q<\infty$. For $q<p$ we proceed by duality
argument.
\end{proof}

\begin{lem}\label{prop13} Under assumptions of {\rm Lemma \ref{prop12}} the operators ${\cal T}_4$ are bounded in
$L_{p,q}(\mathbb R^n\times\mathbb R)$.
\end{lem}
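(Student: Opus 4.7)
The strategy mirrors the passage from the $\widetilde L_{p,q}$ scale to the $L_{p,q}$ scale carried out twice already in the paper: first in the second half of the proof of Theorem \ref{whole}, and then in part (ii) of the proof of Theorem \ref{prop11}. In both places one combines a diagonal $L_p = L_{p,p}$ bound with a ``cancellation'' estimate on functions $h$ supported in a thin time layer and having zero mean in $t$, and then invokes \cite[Theorem 3.8]{BIN}.

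\medskip

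\textbf{Step 1 (diagonal bound).} Apply Lemma \ref{prop12} with $q=p$. Since $\widetilde L_{p,p}=L_{p,p}=L_p(\mathbb R^n\times\mathbb R)$, this already yields boundedness of ${\cal T}_4$ in $L_p(\mathbb R^n\times\mathbb R)$, which supplies the first hypothesis of \cite[Theorem 3.8]{BIN}.

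\medskip

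\textbf{Step 2 (weak cancellation estimate).} Fix a function $h$ supported in the time layer $|s-s^0|\le\delta$ with $\int h(y;s)\,ds\equiv 0$. As in Lemma \ref{weak_2} and in the proof of Theorem \ref{whole}, rewrite
\begin{equation*}
({\cal T}_4h)(x;t)=\int_{-\infty}^t\!\!\int_{\mathbb R^n}\bigl({\cal T}_4(x,y;t,s)-{\cal T}_4(x,y;t,s^0)\bigr)h(y;s)\,dyds.
\end{equation*}
For $|s-s^0|<\delta$ and $t-s^0>2\delta$, combine the $\partial_s$ estimate (\ref{Feb23a}) with $|\alpha|=2$, $|\beta|=0$ and the $\partial_s$ bound (\ref{May0'}) of the whole-space Green function to get a pointwise estimate on $\partial_\tau {\cal T}_4(x,y;t,\tau)$ of the same structural form as the bound (\ref{sum}) on ${\cal T}_4$ itself, but with an extra factor $(t-s)^{-1}$. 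Integration in $\tau$ from $s^0$ to $s$ yields
\begin{equation*}
|{\cal T}_4(x,y;t,s)-{\cal T}_4(x,y;t,s^0)|\le \frac{C\delta}{t-s}\cdot |{\cal T}_4|\text{-type bound}.
\end{equation*}
Interpolating (geometrically) this with the plain bound (\ref{sum}) gives, exactly as in the proof of Lemma \ref{weak_2}, a factor $\delta^\varepsilon/(t-s)^\varepsilon$ multiplying the right-hand side of (\ref{sum}).

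\medskip

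\textbf{Step 3 (application of Proposition \ref{L_p_1}).} The resulting kernel now satisfies the hypotheses of Proposition \ref{L_p_1} with $\varkappa=\varepsilon$, and with the two sets of exponents used at the end of the proof of Lemma \ref{prop12} (one for each summand in (\ref{sum})), namely

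\noindent with $r=2\varepsilon^2$, $\lambda_1=-(1-\lambda^+)_+-2\varepsilon(1+\varepsilon)$, $\lambda_2=-(1-\lambda^-)_+$, $\varepsilon_1=\varepsilon(1+3\varepsilon)$, $\varepsilon_2=\varepsilon^2$, and $\mu$ replaced by $\mu+\varepsilon^2$ for the first summand;

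\noindent with $r=\varepsilon$, $\lambda_1=-(1-\lambda^+)_+-3\varepsilon$, $\lambda_2=-(1-\lambda^-)_+-\varepsilon$, $\varepsilon_1=\varepsilon(1+2\varepsilon)$, $\varepsilon_2=2\varepsilon$, and $\mu$ replaced by $\mu+\varepsilon$ for the second summand.

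Condition (\ref{mu1}), equivalently (\ref{mueps1}), is exactly what guarantees $\lambda_1+\lambda_2>-m$ for both choices. Proposition \ref{L_p_1} then produces the bound
\begin{equation*}
\int_{|t-s^0|>2\delta}\|({\cal T}_4 h)(\cdot;t)\|_p\,dt\le C\|h\|_{p,1},
\end{equation*}
with $C$ independent of $\delta$ and $s^0$, which is the second hypothesis of \cite[Theorem 3.8]{BIN}.

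\medskip

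\textbf{Step 4 (conclusion).} By \cite[Theorem 3.8]{BIN}, ${\cal T}_4$ is bounded on $L_{p,q}(\mathbb R^n\times\mathbb R)$ for every $1<q<p<\infty$. The remaining range $q>p$ follows by a standard duality argument (the adjoint kernel is of the same structural form with $(x,\mu,\lambda^+)$ and $(y,-\mu,\lambda^-)$ swapped, so it satisfies an analogous set of conditions and the previous steps apply to $p'$, $q'$).

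\medskip

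The main obstacle here is bookkeeping rather than anything conceptual: one must check that the pointwise bounds on $\partial_s {\cal T}_4$, obtained by summing the whole-space contribution (\ref{May0'}) and the difference contribution (\ref{Feb23a}), really do fit the exponent pattern used in the proof of Lemma \ref{prop12}, so that after the geometric interpolation with (\ref{sum}) the conditions of Proposition \ref{L_p_1} continue to hold with $\lambda_1+\lambda_2>-m$ under the same window (\ref{mueps1}) on $\mu$.
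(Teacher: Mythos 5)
Your plan follows the paper's proof almost exactly: diagonal $L_p$ bound from Lemma \ref{prop12}, cancellation estimate for $h$ supported in a thin time layer, geometric interpolation with the plain bound, Proposition \ref{L_p_1}, then \cite[Theorem 3.8]{BIN} and duality. The only genuine issue is precisely in the bookkeeping you flag at the end, and it does change the exponents.

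First, a small conceptual slip in Step 2: you do not need (\ref{May0'}) at all. The kernel ${\cal T}_4$ is built directly from the difference $\Gamma_{\cal K}^{\cal N}-\Gamma$, and (\ref{Feb23a}) already bounds $D_xD_x\partial_s(\Gamma_{\cal K}^{\cal N}-\Gamma)$ as a whole; there is no separate ``whole-space contribution'' to add. More importantly, in Step 3 you carry over the parameter table from Lemma \ref{prop12} unchanged, but it does not survive the interpolation. The $\partial_s$ estimate (\ref{Feb23a}) at $|\beta|=0$ has a second bracketed term of the form $\dfrac{{\cal R}_{y,t-s}^{(\lambda^--1)_--1}(t-s)^2}{|x'|\,r_x^{1+2\varepsilon}\,|y'|^3\,r_y^4}$, which is \emph{not} the same as the second term of (\ref{Feb23}) (there it is $(t-s)/(|y'|\,r_y^2)$). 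After the geometric interpolation with (\ref{Feb4}) the second summand therefore acquires $|y'|^{\mu+3\varepsilon}\,r_y^{4\varepsilon}$ and $(t-s)^{\frac{n+2-3\varepsilon}{2}+\varepsilon}$ in the denominator (this is the paper's (\ref{sum1}), as opposed to (\ref{sum})). The correct parameters for Proposition \ref{L_p_1} on that summand are then $r=3\varepsilon$, $\lambda_1=-(1-\lambda^+)_+-5\varepsilon$, $\lambda_2=-(1-\lambda^-)_+-\varepsilon$, $\varepsilon_1=\varepsilon(1+2\varepsilon)$, $\varepsilon_2=4\varepsilon$, with $\mu$ replaced by $\mu+3\varepsilon$ — not the ones you copied. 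Consequently, the working window on $\mu$ must also be tightened to $-\frac mp+(1-\lambda^+)_++3\varepsilon<\mu<m-\frac mp-(1-\lambda^-)_+-4\varepsilon$ rather than (\ref{mueps1}); this is harmless for the statement since $\varepsilon$ is arbitrarily small, but (\ref{mueps1}) ``as is'' does not suffice to verify (\ref{mu_m}) for the corrected parameters. With these adjustments your Steps 1, 2, 4 and the overall strategy are correct and coincide with the paper's.
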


\begin{proof}
We proceed as in the second part of the proof of Theorem \ref{whole}. Let a function $h$ be supported in
the layer $|s-s^0|\le\delta$ and satisfy $\int h(y;s)\, ds\equiv 0$. Then
\begin{equation*}
({\cal T}_4h)(x;t)=\int\limits_{-\infty}^{t}\int\limits_{\mathbb R^n}
\Bigl({\cal T}_4(x,y;t, s)-{\cal T}_4(x,y;t,s^0)\Bigr)\, h(y; s)\, dyds.
\end{equation*}

We choose $0<\lambda^+<\lambda_c^+$, $0<\lambda^-<\lambda_c^-$ and $0<\varepsilon<\frac 14\min\{\frac 1p,1-\frac 1p\}$ such that
\begin{equation*}\label{mueps2}
-\frac mp+(1-\lambda^+)_++3\varepsilon<\mu<m-\frac mp-(1-\lambda^-)_+-4\varepsilon.
\end{equation*}
Then, for $| s- s^0|<\delta$ and $t- s^0>2\delta$, estimate (\ref{Feb23a}) with $|\beta|=0$ implies
\begin{eqnarray*}
&&\left|{\cal T}_4(x,y;t, s)-{\cal T}_4(x,y;t, s^0)\right|
\le\int\limits_{s^0}^s|\partial_\tau {\cal T}_4(x,y;t,\tau)|\,d\tau
\\
&&\le \frac{C\,{\cal R}_{x,t-s}^{(\lambda^+-2)_-}\,|x'|^{\mu}}
{(t-s)^{\frac{n+2}2}\,|y'|^{\mu}}\,\frac {\delta}{t-s}
\exp\Big(-\frac{\widetilde\sigma |x-y|^2}{t-s}\Big)
\\
&&\times\bigg[\frac{(t-s)^{\frac 12+\varepsilon}}
{|x'|^{1+\varepsilon}\,r_x^{1+3\varepsilon}\,|y'|^{\varepsilon}\,r_y^{\varepsilon}}
+\frac{{\cal R}_{y,t-s}^{(\lambda^--1)_--1}(t-s)^2}
{|x'|\,r_x^{1+2\varepsilon}\,|y'|^3\,r_y^4}\bigg].
\end{eqnarray*}
Combining this estimate with (\ref{Feb4}), we get
\begin{eqnarray}\label{sum1}
&&\left|{\cal T}_4(x,y;t, s)-{\cal T}_4(x,y;t, s^0)\right|
\\
&&\leq C\,\frac{\delta^\varepsilon\,{\cal R}_{x,t-s}^{(\lambda^+-1)_--2\varepsilon}\,{\cal R}_{y,t-s}^{(\lambda^--1)_-}
\,|x'|^{\mu-\varepsilon^2}}
{(t-s)^{\frac{n+2-2\varepsilon^2}2+\varepsilon}\,|y'|^{\mu+\varepsilon^2}\,r_x^{\varepsilon+3\varepsilon^2}\,r_y^{\varepsilon^2}}
\exp\Big(-\frac{\varepsilon\widetilde\sigma |x-y|^2}{t-s}\Big)
\nonumber\\
&&+C\,\frac{\delta^\varepsilon\,{\cal R}_{x,t-s}^{(\lambda^+-1)_--2\varepsilon}
\,{\cal R}_{y,t-s}^{(\lambda^--1)_--\varepsilon}\,|x'|^{\mu}}
{(t-s)^{\frac{n+2-3\varepsilon}2+\varepsilon}\,|y'|^{\mu+3\varepsilon}\,r_x^{\varepsilon+2\varepsilon^2}\,r_y^{4\varepsilon}}
\exp\Big(-\frac{\varepsilon\widetilde\sigma |x-y|^2}{t-s}\Big).
\nonumber
\end{eqnarray}
The summands in the right-hand side of (\ref{sum1}) satisfy the conditions of Proposition \ref{L_p_1} with $\varkappa=\varepsilon$ and

\noindent with $r=2\varepsilon^2$, $\lambda_1=-(1-\lambda^+)_+-2\varepsilon(1+\varepsilon)$, $\lambda_2=-(1-\lambda^-)_+$,
$\varepsilon_1=\varepsilon(1+3\varepsilon)$, $\varepsilon_2=\varepsilon^2$ and $\mu$ replaced by $\mu+\varepsilon^2$ for the
first summand;

\noindent with $r=3\varepsilon$, $\lambda_1=-(1-\lambda^+)_+-5\varepsilon$, $\lambda_2=-(1-\lambda^-)_+-\varepsilon$,
$\varepsilon_1=\varepsilon(1+2\varepsilon)$, $\varepsilon_2=4\varepsilon$ and $\mu$ replaced by $\mu+3\varepsilon$ for the
second summand, respectively.\smallskip

Applying Proposition \ref{L_p_1}, we obtain the second condition in \cite[Theorem 3.8]{BIN}, while Lemma \ref{prop12}
for $q=p$ gives the first condition in this theorem. Therefore, Theorem 3.8 \cite{BIN}
ensures that the operators ${\cal T}_4$ are bounded in $L_{p,q}(\mathbb R^n\times \mathbb R)$ for any
$1<q<p<\infty$. For $q>p$ the statement follows by duality arguments.\end{proof}

Now we are in position to prove the main result of our paper.\medskip

{\bf Proof of Theorem \ref{Th1s}.}
The estimate of the last terms in the left-hand sides of
(\ref{TTn1a}) and (\ref{TTn1b}) is equivalent to the boundedness of integral
operators with kernels
$$\frac {|x'|^\mu} {|y'|^\mu}D_xD_x\Gamma^{\cal N}_{\cal K}(x,y;t,s)$$
in $\widetilde L_{p,q}(\mathbb R^n\times \mathbb R)$ and
$L_{p,q}(\mathbb R^n\times \mathbb R)$, respectively. The first statement follows from
Lemma \ref{prop12} and Theorem \ref{whole}, the second one -- form Lemma \ref{prop13} and Theorem
\ref{whole}. The first terms in (\ref{TTn1a}) and in (\ref{TTn1b}) are estimated by using equation (\ref{Jan1}),
and Theorem follows.

\section{Appendix}

\begin{lem}\label{Zhut'} Let $\phi$ be a function on $\mathbb{R}_+$ such that
$$
|\phi(\zeta)|\leq \Lambda |\zeta|\qquad\mbox{for all}
\quad\zeta \in\mathbb R_+.
$$
Let $\varrho_1,\,\varrho_2>0$, $x_1>\phi(\varrho_1)$, $y_1\in\mathbb R$, $a,\,b,\,c\ge0$. Then
\begin{eqnarray}\label{Ap1b}
&&\int\limits_{x_1}^{\infty}
\Big(\frac{\varrho_1+|\zeta|}{\varrho_1+\varrho_2+|\zeta|}\Big)^{-a}
\Big(\frac {\varrho_1+|\zeta|}{\zeta-\phi(\varrho_1)}\Big)^{b}
\Big(\frac {\varrho_2}{\zeta-\phi(\varrho_1)}\Big)^{c}
\\
&&\times\exp\left(-\frac{|\zeta-y_1|^2}{\varrho_2^2}\right)\,\frac {d\zeta}{\varrho_2}
\le C\,\Big(\frac {x_1-\phi(\varrho_1)}{\varrho_1+|x_1|}\Big)^{(1-b-c-\varepsilon)_-}
\nonumber\\
&&\times\begin{cases}
\displaystyle\vphantom{\left(\frac{|x_1|}{|x_1|}\right)^{-1}}
\Big(\frac{\varrho_1+|x_1|}{\varrho_1+\varrho_2+|x_1|}\Big)^{-a}
\Big(\frac {\varrho_1+|x_1|}{\varrho_2}\Big)^{1-c}, & {\rm if}\quad c>1;
\\
\displaystyle\vphantom{\left(\frac{|x_1|}{|x_1|}\right)^{-1}}
\Big(\frac{\varrho_1+|x_1|}{\varrho_1+\varrho_2+|x_1|}\Big)^{-(a+c+\varepsilon-1)_+}
\Big(\frac {\varrho_1+\varrho_2+|x_1|}{\varrho_2}\Big)^{\min\{b,1-c\}}, &  {\rm if}\quad c\le1.
\end{cases}
\nonumber
\end{eqnarray}
Here $\varepsilon$ is arbitrary small positive number while $C$ may depend on $a$, $b$, $c$,
$\Lambda$ and $\varepsilon$.
\end{lem}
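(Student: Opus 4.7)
The plan is to reduce the bound to a one-dimensional calculus exercise. I first change variables $\eta = \zeta - \phi(\varrho_1)$, translating the lower limit to $d := x_1 - \phi(\varrho_1) > 0$. The Lipschitz bound $|\phi(\varrho_1)| \leq \Lambda \varrho_1$ ensures that $R(\eta) := \varrho_1 + |\eta + \phi(\varrho_1)|$ is comparable to $\varrho_1 + |\eta|$ with constants depending only on $\Lambda$. Writing $M := \varrho_1 + |x_1|$, one has $d \leq M$, and $d/M \in (0,1]$ is exactly the quotient appearing in the conclusion.

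The main idea is to split $[d, \infty)$ at the two natural scales $M$ and $M+\varrho_2$. On the near-boundary piece $[d, M]$, one has $R(\eta) \asymp M$, so the first factor collapses to a constant times $(M/(M+\varrho_2))^{-a}$ and the integrand reduces (after bounding the Gaussian by $1$) to $M^b \varrho_2^{c-1}\eta^{-b-c}$. The elementary estimate
\[
\int_d^M \eta^{-b-c}\,d\eta \leq C\, M^{1-b-c}(d/M)^{(1-b-c-\varepsilon)_-},
\]
with the $\varepsilon$ absorbing the logarithmic borderline $b+c=1$, produces the prefactor $(d/M)^{(1-b-c-\varepsilon)_-}$ in the statement; the resulting bound already matches the target in the case $c>1$, where this piece dominates.

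On the intermediate piece $[M, M+\varrho_2]$, $R(\eta) \asymp \eta$ so the $b$-factor becomes bounded, and the integrand is majorised by $(1+\varrho_2/\eta)^a\eta^{-c}\varrho_2^{c-1}$. Splitting further on whether $a+c$ exceeds $1$, one gets a contribution bounded by $(M/(M+\varrho_2))^{-(a+c+\varepsilon-1)_+}\bigl((M+\varrho_2)/\varrho_2\bigr)^{\min\{b,1-c\}}$ times the prefactor from the previous step, which matches the $c \leq 1$ form of the right-hand side. On the Gaussian tail $[M+\varrho_2,\infty)$ the exponential provides the decay actually needed to ensure convergence in the case $c \leq 1$ (where $\eta^{-c}$ alone fails to integrate at infinity) and gives a contribution dominated by those from the other two pieces.

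The main obstacle will be the bookkeeping in the subcase $c \leq 1$: to extract the sharp exponent $\min\{b, 1-c\}$ on $(M+\varrho_2)/\varrho_2$, one must carefully compare contributions from all three pieces, split according to $a+c$ versus $1$, and verify that all logarithmic borderline cases ($c=1$, $b+c=1$, $a+c=1$) are correctly absorbed into the auxiliary small parameter $\varepsilon$. The Lipschitz constant $\Lambda$ enters only through implicit constants via $R(\eta) \asymp \varrho_1 + |\eta|$ and via the standard shift of the Gaussian argument, so it does not affect the exponents in the final bound.
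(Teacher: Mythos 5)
The decomposition you propose (shift to $\eta=\zeta-\phi(\varrho_1)$, then split at $M$ and $M+\varrho_2$) is genuinely different from the paper's, which splits first on $x_1\gtrless 2\Lambda\varrho_1$ and then on $\varrho_1\gtrless\varrho_2$. However, your treatment of the near-boundary piece $[d,M]$ has a real gap when $c\le 1$. After bounding the Gaussian by $1$ you obtain
\[
\Big(\frac{M}{M+\varrho_2}\Big)^{-a}\Big(\frac{M}{\varrho_2}\Big)^{1-c}\Big(\frac{d}{M}\Big)^{(1-b-c-\varepsilon)_-},
\]
and you assert this matches the target ``in the case $c>1$, where this piece dominates.'' But for $c\le 1$, $b<1-c$, and $\varrho_2\ll M$, this quantity is of order $(M/\varrho_2)^{1-c}$, whereas the right-hand side of \eqref{Ap1b} is of order $((M+\varrho_2)/\varrho_2)^{\min\{b,1-c\}}\asymp(M/\varrho_2)^{b}$ with $b<1-c$. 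Your near-boundary estimate therefore strictly exceeds the claimed bound, and nothing in the outline brings it back down. Concretely, take $\phi\equiv 0$, $a=b=0$, $c=1/2$, $\varrho_1=x_1=1$, $\varrho_2\to 0$: then $d=1$, $M=2$, your $[d,M]$ contribution is of size $\varrho_2^{-1/2}\to\infty$, while the right-hand side of \eqref{Ap1b} is $O(1)$ (and the true integral, by the Gaussian, is $O(\varrho_2^{1/2})$).

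The fix is to retain the Gaussian on $[d,M]$ whenever $M\gg\varrho_2$, since the exponential restricts the effective range of integration to a window of width $\asymp\varrho_2$, which is what produces the correct order. This is in fact what the paper's proof does: the Gaussian is carried through in Case~1 ($x_1\ge 2\Lambda\varrho_1$) and in Subcase~2.2 ($\varrho_1\ge\varrho_2$), and it is discarded only in Subcase~2.1, where the near piece $(x_1,2\Lambda\varrho_1)$ has length $\lesssim\varrho_1\le\varrho_2$ so that nothing is lost. If you keep the Gaussian on $[d,M]$, you will need sub-cases depending on where its center $\eta_0=y_1-\phi(\varrho_1)$ lies relative to $d$, $\varrho_2$, and $M$, at which point the bookkeeping comes close to the paper's original case analysis; so I would not say your route simplifies matters. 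The middle and tail pieces in your sketch look sound, and the observation $R(\eta)\asymp\varrho_1+|\eta|$ with $\Lambda$-dependent constants is correct, so the issue is localized to this one step.
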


\begin{proof}
{\it Case 1}: $x_1\ge 2\Lambda\varrho_1$. Then $\varrho_1+|\zeta|\asymp\zeta-\phi(\varrho_1)$
for $\zeta\ge x_1$, and integral is estimated by
\begin{eqnarray*}
&&C\,\int\limits_{x_1}^{\infty}
\Big(\frac{\varrho_1+|\zeta|}{\varrho_1+\varrho_2+|\zeta|}\Big)^{-a}
\Big(\frac {\varrho_2}{\zeta}\Big)^{c}
\,\exp\left(-\frac{|\zeta-y_1|^2}{\varrho_2^2}\right)\,\frac {d\zeta}{\varrho_2}\\
&&\le C\int\limits_{\frac{x_1}{\varrho_2}}^{\infty}\Big(\frac{\xi}{\xi+1}\Big)^{-a}\xi^{-c}
\,\exp\left(-\big|\xi-\frac {y_1}{\varrho_2}\big|^2\right)\,d\xi\\
&&\le C\,\Big(\Big(\frac{x_1}{\varrho_2}\Big)^{(1-c-\varepsilon)_-}
+\Big(\frac{x_1}{\varrho_2}\Big)^{(1-a-c-\varepsilon)_-}\Big),
\end{eqnarray*}
 which gives (\ref{Ap1b}) in the case 1. \medskip

{\it Case 2}: $x_1\le 2\Lambda\varrho_1$ is divided into two subcases.\medskip

{\it Subcase 2.1}: $\varrho_1\le \varrho_2$. Then we split the interval $(x_1,\infty)$ into two
parts. The integral over $(2\Lambda\varrho_1,\infty)$ is estimated
in the same way as in the case 1. Further, the integral over $(x_1,2\Lambda\varrho_1)$ is estimated by
\begin{eqnarray*}
&&C\,\Big(\frac{\varrho_1}{\varrho_1+\varrho_2}\Big)^{-a}
\int\limits_{x_1}^{2\Lambda\varrho_1}
\Big(\frac {\varrho_1}{\zeta-\phi(\varrho_1)}\Big)^{b}
\Big(\frac {\varrho_2}{\zeta-\phi(\varrho_1)}\Big)^{c}\,\frac {d\zeta}{\varrho_2}
\le C\,\Big(\frac{\varrho_1}{\varrho_1+\varrho_2}\Big)^{-a}
\\
&&
\times\!\!\!\int\limits_{\frac {x_1-\phi(\varrho_1)}{\varrho_2}}^{\frac {3\Lambda\varrho_1}{\varrho_2}}
\Big(\frac {\varrho_1}{\xi\varrho_2}\Big)^{b}\xi^{-c}\,d\xi
\le C\,\Big(\frac{\varrho_1}{\varrho_1+\varrho_2}\Big)^{-a}
\,\Big(\frac {\varrho_1}{\varrho_2}\Big)^{1-c}\,
\Big(\frac {x_1-\phi(\varrho_1)}{\varrho_1}\Big)^{(1-b-c-\varepsilon)_-},
\end{eqnarray*}
 which gives (\ref{Ap1b}) in the subcase 2.1. \medskip

{\it Subcase 2.2}: $\varrho_1\ge \varrho_2$. Then
$\varrho_1+|\zeta|\asymp\varrho_1+\varrho_2+|\zeta|$ for $\zeta\ge x_1$, and the integral is
estimated by
\begin{eqnarray*}
&&C\,\int\limits_{x_1}^{\infty}
\Big(\frac{\varrho_1+|\zeta|}{\zeta-\phi(\varrho_1)}\Big)^{b}
\Big(\frac {\varrho_2}{\zeta-\phi(\varrho_1)}\Big)^{c}
\,\exp\left(-\frac{|\zeta-y_1|^2}{\varrho_2^2}\right)\,\frac {d\zeta}{\varrho_2}\\
&&\le C\int\limits_{\frac{x_1-\phi(\varrho_1)}{\varrho_2}}^{\infty}
\Big(1+\frac{\varrho_1}{\xi\varrho_2}\Big)^{b}\xi^{-c}
\,\exp\left(-\big|\xi-\frac {y_1+\phi(\varrho_1)}{\varrho_2}\big|^2\right)\,d\xi
\le C\,\Big(\frac {\varrho_1}{\varrho_2}\Big)^{b}\\
&&\times\Big(\frac {x_1-\phi(\varrho_1)}{\varrho_2}\Big)^{(1-b-c-\varepsilon)_-}
\le C\,\Big(\frac {x_1-\phi(\varrho_1)}{\varrho_1}\Big)^{(1-b-c-\varepsilon)_-}
\Big(\frac{\varrho_1}{\varrho_2}\Big)^{\min\{b,1-c\}}\!,
\end{eqnarray*}
 which gives (\ref{Ap1b}) in the subcase 2.2.
\end{proof}

\begin{lem}\label{Lozenka1} Let $d\in\mathbb N$, $\varrho_1,\varrho_2>0$ and $a,\,b>-d$, $a+b>-d$.
Then
\begin{eqnarray}\label{Okt22a}
&&\int\limits_{\mathbb R^d}\exp\Big(-\frac{|x-z|^2}{\varrho_1}\Big)\exp\Big(-\frac{|z-y|^2}{\varrho_2}\Big)
\Big(\frac{|z|}{|z|+\sqrt{\varrho_1}}\Big )^{a}
\Big(\frac{|z|}{|z|+\sqrt{\varrho_2}}\Big )^{b}
d\zeta\nonumber\\
&&\leq C\,\frac{\varrho_1^{\frac{d+b_-}2}
\varrho_2^{\frac{d+a_-}2}}{(\varrho_1+\varrho_2)^{\frac{d+a_-+b_-}2}}\,\exp\Big(-\frac{|x-y|^2}{\varrho_1+\varrho_2}\Big),
\end{eqnarray}
where $C$ may depend on $a$ and $b$.
\end{lem}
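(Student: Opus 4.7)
The plan is to begin with the standard Gaussian identity obtained by completing the square:
\[
\frac{|x-z|^2}{\varrho_1}+\frac{|z-y|^2}{\varrho_2}=\frac{|x-y|^2}{\varrho_1+\varrho_2}+\frac{|z-\bar z|^2}{\varrho_{12}},
\]
where $\bar z=(\varrho_2 x+\varrho_1 y)/(\varrho_1+\varrho_2)$ and $\varrho_{12}=\varrho_1\varrho_2/(\varrho_1+\varrho_2)$. This extracts the claimed factor $\exp(-|x-y|^2/(\varrho_1+\varrho_2))$ and reduces the lemma to the uniform-in-$\bar z$ estimate
\[
I(\bar z):=\int_{\mathbb R^d}\exp\Big(-\frac{|z-\bar z|^2}{\varrho_{12}}\Big)\,\varphi(z)\,dz\le C\,\frac{\varrho_1^{(d+b_-)/2}\varrho_2^{(d+a_-)/2}}{(\varrho_1+\varrho_2)^{(d+a_-+b_-)/2}},
\]
with $\varphi(z)=\bigl(|z|/(|z|+\sqrt{\varrho_1})\bigr)^a\bigl(|z|/(|z|+\sqrt{\varrho_2})\bigr)^b$.

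By the obvious symmetry $(\varrho_1,a)\leftrightarrow(\varrho_2,b)$, I would assume $\varrho_1\le\varrho_2$, so that $\varrho_{12}\asymp\varrho_1$ and $\varrho_1+\varrho_2\asymp\varrho_2$, and the target simplifies to $I(\bar z)\le C\,\varrho_1^{(d+b_-)/2}\varrho_2^{-b_-/2}$. To handle the weight, I would use the elementary pointwise bound
\[
\Big(\frac{|z|}{|z|+\sqrt{\varrho_i}}\Big)^a\le C\begin{cases}1,& a\ge 0,\\ 1+(\sqrt{\varrho_i}/|z|)^{-a},& a<0,\end{cases}
\]
and expand $\varphi(z)$ as a sum of at most four terms of the form $\varrho_1^{r_1/2}\varrho_2^{r_2/2}|z|^{-r_1-r_2}$ with $r_1\in\{0,-a_-\}$ and $r_2\in\{0,-b_-\}$. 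The hypotheses $a,b,a+b>-d$ are exactly what force every $r_1+r_2$ to lie in $[0,d)$, so that each summand is locally integrable near the origin.

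The heart of the proof would then be the uniform-in-$\bar z$ Gaussian moment bound
\[
\int_{\mathbb R^d}\frac{\exp(-|z-\bar z|^2/\varrho_{12})}{|z|^r}\,dz\le C\,\varrho_{12}^{(d-r)/2}, \qquad 0\le r<d,
\]
which I would prove by splitting $\mathbb R^d=\{|z|\le R\}\cup\{|z|>R\}$ with $R=\max(|\bar z|,\sqrt{\varrho_{12}})$: on the small set the Gaussian is bounded by~$1$ while $|z|^{-r}$ is integrable since $r<d$, and on the large set one has $|z-\bar z|\ge\tfrac12|z|$, so the integrand decays as a genuine Gaussian moment of order $-r$. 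Applying this to each of the four summands of $\varphi$ and using $\varrho_{12}\asymp\varrho_1$, the terms containing the singular factor $(\sqrt{\varrho_2}/|z|)^{-b_-}$ produce exactly $\varrho_1^{(d+b_-)/2}\varrho_2^{-b_-/2}$, matching the target, while the remaining terms are of order $\varrho_1^{d/2}$, which is no larger under $\varrho_1\le\varrho_2$ and $b_-\le 0$.

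The main obstacle is the \emph{uniformity} in $\bar z$: the bound on $I(\bar z)$ is transparent at $\bar z=0$ (where the peak of the Gaussian coincides with the singularity of $|z|^{-r}$, which is the worst case), but a uniform statement requires precisely the ball--complement split above. Once this uniformity is established, collecting the contributions of the four pieces reduces to the routine power-counting described above, and the lemma follows.
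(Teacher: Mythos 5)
Your route is genuinely different from the paper's and, once one small point is repaired, it works. The paper does not complete the square. It first treats $a,b\ge 0$ as known, then for $a,b<0$ fixes WLOG $\varrho_1\ge\varrho_2$ and splits the $z$-integral at $|z|=\sqrt{\varrho_2}$: on $\{|z|>\sqrt{\varrho_2}\}$ the weights are bounded pointwise and the $a,b\ge0$ case is invoked, while on $\{|z|<\sqrt{\varrho_2}\}$ the Gaussian factor $\exp(-|x-y|^2/(\varrho_1+\varrho_2))$ is pulled out via the crude inequality $|x-y|^2\le\frac{\varrho_1+\varrho_2}{\varrho_1}|x-z|^2+\frac{\varrho_1+\varrho_2}{\varrho_2}|y-z|^2$ and the remaining power weight is integrated over the small ball; the mixed-sign case is reduced to this. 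Your version instead extracts $\exp(-|x-y|^2/(\varrho_1+\varrho_2))$ identically by completing the square, expands the weight into four pure power terms, and reduces everything to one clean Gaussian moment estimate. That is arguably tidier and more modular than the paper's case analysis, and your power counting is correct.

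The one genuine gap is in your proof of the uniform moment bound
$\int_{\mathbb R^d}|z|^{-r}\exp(-|z-\bar z|^2/\varrho_{12})\,dz\le C\varrho_{12}^{(d-r)/2}$.
With $R=\max(|\bar z|,\sqrt{\varrho_{12}})$ you say that on $\{|z|\le R\}$ one may bound the Gaussian by $1$ and use local integrability of $|z|^{-r}$; but that yields $\int_{|z|\le R}|z|^{-r}\,dz\sim R^{d-r}$, which equals $|\bar z|^{d-r}$ when $|\bar z|>\sqrt{\varrho_{12}}$ and then exceeds the target $\varrho_{12}^{(d-r)/2}$ by an unbounded factor. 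Precisely in that regime you must not discard the Gaussian on the inner ball: there $|z-\bar z|\ge|\bar z|/2$ on $\{|z|\le|\bar z|/2\}$, so the Gaussian contributes the compensating factor $\exp(-c|\bar z|^2/\varrho_{12})$, and on the annulus $\{|\bar z|/2<|z|\le|\bar z|\}$ one uses $|z|^{-r}\le (|\bar z|/2)^{-r}\le C\varrho_{12}^{-r/2}$ and $\int\exp(-|z-\bar z|^2/\varrho_{12})\,dz\le C\varrho_{12}^{d/2}$. A cleaner way to organize the same computation is to substitute $z=\bar z+\sqrt{\varrho_{12}}\,w$, which reduces the claim to the $\varrho_{12}=1$ statement $\sup_{c\in\mathbb R^d}\int e^{-|w|^2}|c+w|^{-r}\,dw<\infty$ for $0\le r<d$, proved by the same two-region split. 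With this correction your argument is complete.
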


\begin{proof}  (i) The case $a, b\geq 0$ is well known.\medskip

\noindent  (ii) Let $a, b< 0$. Without loss of generality we can assume that $\varrho_1\geq \varrho_2$.
Then the integral over the set $\{|z|>\sqrt{\varrho_2}\}$ in (\ref{Okt22a}) is estimated by
\begin{equation*}
\int\limits_{\mathbb R^d}\exp\Big(-\frac{|x-z|^2}{\varrho_1}\Big)\exp\Big(-\frac{|y-z|^2}{\varrho_2}\Big)
\Big(\frac{\sqrt{\varrho_2}}{\sqrt{\varrho_2}+\sqrt{\varrho_1}}\Big )^{a} dz.
\end{equation*}
The last expression does not exceed the right-hand side of (\ref{Okt22a}) by {\it (i)}.

Since
$$
|x-y|^2\leq\frac{\varrho_1+\varrho_2}{\varrho_1}\,|x-z|^2+\frac{\varrho_1+\varrho_2}{\varrho_2}\,|y-z|^2,
$$
the integral over the set $\{|z|<\sqrt{\varrho_2}\}$ is estimated by
\begin{eqnarray*}
&&C\exp\Big(-\frac{|x-y|^2}{\varrho_1+\varrho_2}\Big)\int\limits_{|z|<\sqrt{\varrho_2}}
\Big(\frac{|z|}{\sqrt{\varrho_2}+\sqrt{\varrho_1}}\Big )^{a}\Big(\frac{|z|}{\sqrt{\varrho_2}}\Big )^{b} dz\\
&&\le C\exp\Big(-\frac{|x-y|^2}{\varrho_1+\varrho_2}\Big)\frac{\varrho_2^{\frac {d+a}2}}{(\varrho_1+\varrho_2)^{\frac{a}2}}.
\end{eqnarray*}
The last expression also does not exceed the right-hand side of (\ref{Okt22a}).\medskip

\noindent (iii) The remaining cases can be easily reduced to the case {\it (ii)}.
\end{proof}

We formulate two auxiliary results on estimates of integral operators.
The first statement is proved in
\cite[Lemmas A.1 and A.3 and Remark A.2]{KN} for $\varepsilon_1=\varepsilon_2=0$,
see also \cite[Lemmas 2.1 and 2.2]{Na}. In general case the proof runs in the same way
by using \cite[Lemma A.3]{KN1}.

\begin{prop}\label{L_p}
Let $1\le p\le\infty$, $\sigma>0$, $0<r\le 2$,
$0\le\varepsilon_1<\frac 1p$, $0\le\varepsilon_2<1-\frac 1p$, ${\lambda_1}+{\lambda_2}>-m$,
and let
\begin{equation}\label{mu_m}
-\frac mp-\lambda_1<\mu<m-\frac mp+\lambda_2.
\end{equation}
Suppose also that the kernel ${\cal T}(x,y;t,s)$ satisfies the inequality
\begin{equation*}
|{\cal T}(x,y;t,s)|\le C\,\frac {{\cal R}_{x,t-s}^{\lambda_1+r}
{\cal R}_{y,t-s}^{\lambda_2}|x'|^{\mu-r}}
{(t-s)^{\frac {n+2-r}2}|y'|^{\mu}\,r_x^{\varepsilon_1}r_y^{\varepsilon_2}}\,\exp
\left(-\frac{\sigma|x-y|^2}{t-s} \right),
\end{equation*}
 for $t>s$. Then the integral operator ${\cal T}$ is bounded in
$L_p(\mathbb R^n\times\mathbb R)$ and in $\widetilde L_{p,\infty}(\mathbb R^n\times\mathbb R)$.
\end{prop}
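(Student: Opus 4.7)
The plan is to reduce the boundedness of $\mathcal{T}$ to a Schur-type estimate combined with a Hardy inequality on the cone $K$. I would essentially follow the scheme of \cite[Lemmas A.1, A.3]{KN}, which handles the case $\varepsilon_1 = \varepsilon_2 = 0$, and augment it by \cite[Lemma A.3]{KN1} to absorb the new boundary-distance factors $r_x^{-\varepsilon_1}$, $r_y^{-\varepsilon_2}$.

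First, I would use the Gaussian factor $\exp(-\sigma|x-y|^2/(t-s))$ together with Fubini's theorem to integrate out the transverse variables $x''-y''\in\mathbb{R}^{n-m}$. This produces a factor $(t-s)^{(n-m)/2}$ that cancels part of the $(t-s)$-denominator and, since the kernel bound depends on $x,y$ only through $x', y'$ and $|x-y|^2$, reduces the problem to an integral operator on the $m$-dimensional cone $K$. Thus it suffices to establish both estimates with $n$ replaced by $m$ and the operator acting on functions on $K\times\mathbb{R}$.

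For the $L_p$-estimate, I would apply Schur's test with test weights of the form $\sigma(x)=|x'|^\alpha$ for an appropriate $\alpha=\alpha(\mu,\lambda_1,\lambda_2,p)$. After introducing polar coordinates in $x'$ and the time scaling $t-s\mapsto\tau$, the resulting Schur integrals split into a radial part and an angular part: the condition $\lambda_1+\lambda_2>-m$ is needed for integrability of the angular factors near the origin of the cone, while (\ref{mu_m}) ensures convergence of the radial integral both at the vertex and at infinity. The extra weights $r_x^{-\varepsilon_1}$, $r_y^{-\varepsilon_2}$, which are harmless away from $\partial K$ but singular at the boundary, are absorbed into the Schur weight by applying the Hardy inequality on the cone from \cite[Lemma A.3]{KN1}; the constraints $\varepsilon_1<1/p$ and $\varepsilon_2<1-1/p$ are exactly the classical Hardy-type conditions in the $L_p$ scale on a Lipschitz domain.

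The $\widetilde{L}_{p,\infty}$-bound proceeds in the same spirit: for fixed $x$ I would bring the supremum over $t$ inside the integral and, using the $(t-s)$-decay of the kernel, produce a time-integrated kernel $N(x,y)$ controlling $\sup_t|(\mathcal{T}h)(x;t)|$. The resulting estimate then follows by applying the same weighted Schur argument to $N$. The \emph{main obstacle} is the careful bookkeeping of powers to ensure that, after all reductions, each hypothesis corresponds to a convergent integral: (\ref{mu_m}) controls the singularity at the vertex and the decay at infinity, the inequality $\lambda_1+\lambda_2>-m$ controls the angular integral over the cross-section of $K$, and the bounds on $\varepsilon_1,\varepsilon_2$ control the boundary singularity via Hardy. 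With these modifications, the arguments of \cite[Lemmas A.1, A.3]{KN} and \cite[Lemmas 2.1, 2.2]{Na} transfer without change.
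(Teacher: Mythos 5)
Your proposal is consistent with the paper's treatment of this proposition: the paper supplies no proof of its own, but simply cites \cite[Lemmas A.1, A.3, Remark A.2]{KN} and \cite[Lemmas 2.1, 2.2]{Na} for the case $\varepsilon_1=\varepsilon_2=0$, and states that the general case ``runs in the same way'' using \cite[Lemma A.3]{KN1}. Your reconstruction of what those lemmas do---integrating out the transverse Gaussian to reduce to an operator on $K\times\mathbb R$, applying a weighted Schur-type test with weights $|x'|^\alpha$ for the $L_p$ bound, time-integrating the kernel and then estimating the resulting spatial kernel for the $\widetilde L_{p,\infty}$ bound, and absorbing $r_x^{-\varepsilon_1},\,r_y^{-\varepsilon_2}$ by a Hardy inequality under $\varepsilon_1<1/p$, $\varepsilon_2<1-1/p$---is the right skeleton and matches the standard route in those references.

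One small bookkeeping error: you attribute the hypothesis $\lambda_1+\lambda_2>-m$ to integrability of ``angular factors.'' The angular (boundary-distance) singularities are $r_x^{-\varepsilon_1}$ and $r_y^{-\varepsilon_2}$, and those are controlled by the Hardy constraints on $\varepsilon_1,\varepsilon_2$. The exponents $\lambda_1,\lambda_2$ enter only through the radial weights ${\cal R}_{x,t-s}^{\lambda_1+r}$, ${\cal R}_{y,t-s}^{\lambda_2}$, which depend on $|x'|$, $|y'|$ and $t-s$ but not on the angular variable. Note also that $\lambda_1+\lambda_2>-m$ is exactly the condition that the interval (\ref{mu_m}) be nonempty; in the Schur integrals it enters through the radial part near the vertex (when both $|x'|$ and $|y'|$ are small compared with $\sqrt{t-s}$), not through a cross-sectional integral. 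This is a mislabelling rather than a gap, and your argument is otherwise sound.
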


The next proposition is \cite[Lemma A.4]{KN1}.

\begin{prop}\label{L_p_1}
Let $1<p<\infty$, $\sigma>0$, $\varkappa>0$, $0\le r\le 2$,
$0\le\varepsilon_1<\frac 1p$, $0\le\varepsilon_2<1-\frac 1p$,
${\lambda_1}+{\lambda_2}>-m$ and let $\mu$ be subject to {\rm (\ref{mu_m})}.
Also let the kernel ${\cal T}(x,y;t,s)$ satisfy the inequality
\begin{equation*}
|{\cal T}(x,y;t,s)|  \le C\,\frac {\delta^\varkappa\,{\cal
R}_{x}^{\lambda_1+r} {\cal R}_{y}^{\lambda_2}\,|x'|^{\mu-r}}
{(t-s)^{\frac{n+2-r}2+\varkappa}\,|y'|^{\mu}\,r_x^{\varepsilon_1}r_y^{\varepsilon_2}}
 \exp\left(-\frac{\sigma|x-y|^2}{t-s} \right),
\end{equation*}
for $x,y\in{\cal K}$ and $t>s+\delta$. Then for any $s^0>0$ the norm of the operator
$${\cal T}\ :\ L_{p,1}(\mathbb R^n\times\ (s^0-\delta,s^0+\delta))\ \to \
L_{p,1}(\mathbb R^n\times\ (s^0+2\delta,\infty))$$ does not exceed
a constant $C$ independent of $\delta$ and $s^0$.
\end{prop}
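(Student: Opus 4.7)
The plan is to reduce the $L_{p,1}\to L_{p,1}$ mapping property to a pointwise-in-time spatial operator bound and then integrate in the time variable, extracting from the factor $\delta^\varkappa(t-s)^{-\varkappa}$ the decay required as $t\to\infty$.

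\textbf{Step 1 (fiberwise spatial bound).} For each fixed pair $t>s$ with $t>s+\delta$, view the spatial kernel
\[
K_{t,s}(x,y) = \frac{\delta^\varkappa {\cal R}_{x,t-s}^{\lambda_1+r}{\cal R}_{y,t-s}^{\lambda_2}|x'|^{\mu-r}}{(t-s)^{(n+2-r)/2+\varkappa}\,|y'|^\mu\, r_x^{\varepsilon_1} r_y^{\varepsilon_2}}\,\exp\!\Bigl(-\frac{\sigma|x-y|^2}{t-s}\Bigr)
\]
as an integral operator in $x,y\in{\cal K}$, and establish the bound
\[
\|K_{t,s}\|_{L_p(\mathbb R^n)\to L_p(\mathbb R^n)}\ \le\ \frac{C\,\delta^\varkappa}{(t-s)^{1+\varkappa}},
\]
with $C$ independent of $t,s$. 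This is the pointwise-in-time counterpart of Proposition \ref{L_p}. I would obtain it by a weighted Schur test with test weights of the form $w(z)=|z'|^\alpha$, choosing $\alpha$ in terms of $p,\mu,\lambda_1,\lambda_2$; the assumptions (\ref{mu_m}), $\lambda_1+\lambda_2>-m$, $\varepsilon_1<1/p$ and $\varepsilon_2<1-1/p$ are exactly what is needed to make the integrals $\int K_{t,s}(x,y)w(y)^{p'}\,dy$ and $\int K_{t,s}(x,y)w(x)^{p}\,dx$ finite. Once the Gaussian concentrates $y$ near $x$ on the scale $\sqrt{t-s}$, the combination of the normalization $(t-s)^{-(n+2-r)/2}$, the ratio $|x'|^{\mu-r}|y'|^{-\mu}$ and the cutoffs ${\cal R}_{x,t-s},{\cal R}_{y,t-s}$ produces exactly the factor $\delta^\varkappa(t-s)^{-1-\varkappa}$.

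\textbf{Step 2 (Minkowski in the inner time variable).} Given Step 1, for $t>s^0+2\delta$ the triangle inequality yields
\[
\|({\cal T}h)(\cdot,t)\|_{L_p(\mathbb R^n)}\ \le\ \int_{s^0-\delta}^{s^0+\delta}\|K_{t,s}\|_{\rm op}\,\|h(\cdot,s)\|_p\,ds\ \le\ \int_{s^0-\delta}^{s^0+\delta}\frac{C\,\delta^\varkappa}{(t-s)^{1+\varkappa}}\,\|h(\cdot,s)\|_p\,ds.
\]

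\textbf{Step 3 (integrate in $t$ and exchange).} For $s\in(s^0-\delta,s^0+\delta)$ and $t>s^0+2\delta$ we have $t-s>\delta$, hence
\[
\int_{s^0+2\delta}^\infty\frac{\delta^\varkappa\,dt}{(t-s)^{1+\varkappa}}\ \le\ \int_\delta^\infty\frac{\delta^\varkappa\,d\tau}{\tau^{1+\varkappa}}\ =\ \frac{1}{\varkappa},
\]
uniformly in $s,s^0,\delta$. Fubini's theorem then gives
\[
\int_{s^0+2\delta}^\infty\|({\cal T}h)(\cdot,t)\|_p\,dt\ \le\ \frac{C}{\varkappa}\int_{s^0-\delta}^{s^0+\delta}\|h(\cdot,s)\|_p\,ds\ =\ \frac{C}{\varkappa}\,\|h\|_{L_{p,1}(\mathbb R^n\times(s^0-\delta,s^0+\delta))},
\]
with constant independent of $\delta$ and $s^0$, which is exactly the claim.

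The main obstacle is Step 1. Three features prevent a naive Schur-test calculation on the cone: the ratio $|x'|^{\mu-r}/|y'|^\mu$ forces $\alpha$ to lie in the non-empty window carved out by (\ref{mu_m}); the boundary-distance factors $r_x^{-\varepsilon_1}, r_y^{-\varepsilon_2}$ are singular on $\partial{\cal K}$, so the $y$-integration must be split into the interior regime $r_y\asymp 1$ and the boundary layer $r_y\ll 1$, where $\varepsilon_2<1-1/p$ is precisely the threshold that guarantees integrability after the $L_{p'}$ pairing; and the cutoffs ${\cal R}_{x,t-s},{\cal R}_{y,t-s}$ interpolate between the scales $|z'|\lesssim\sqrt{t-s}$ and $|z'|\gtrsim\sqrt{t-s}$, so each Schur integration has to be split according to that dichotomy as well. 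Executing this regime-by-regime analysis, which for $\varepsilon_1=\varepsilon_2=0$ is carried out in \cite[Lemmas A.1 and A.3]{KN} and with positive $\varepsilon_i$ in \cite[Lemma A.3]{KN1}, delivers the fiber bound with the claimed exponent $(t-s)^{-1-\varkappa}$, after which Steps 2--3 are essentially bookkeeping.
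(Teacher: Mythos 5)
Your proposal is correct and follows what is essentially the intended argument: the paper itself gives no proof of Proposition \ref{L_p_1} (it is quoted as \cite[Lemma A.4]{KN1}), and that proof likewise reduces the matter to a fixed-time spatial $L_p$ bound of order $\delta^\varkappa(t-s)^{-1-\varkappa}$ — obtained by the same weighted Schur test with weights in $|z'|$ and $r_z$ that underlies Proposition \ref{L_p} — followed by Minkowski's inequality and the integration $\int_\delta^\infty\delta^\varkappa\tau^{-1-\varkappa}\,d\tau=\varkappa^{-1}$. Your Steps 2--3 are complete, and your Step 1, while only sketched, correctly identifies the dimensional bookkeeping (the Gaussian yields $(t-s)^{n/2}$, while ${\cal R}_{x,t-s}^{r}|x'|^{-r}\le(t-s)^{-r/2}$ restores the exponent $-1-\varkappa$) and the role of each hypothesis, deferring the regime-by-regime analysis to the proofs of the cited appendix lemmas exactly as the source does.
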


\bigskip

We thank S. V. Kislyakov for useful discussions.
V.~K. was supported by the Swedish Research Council (VR).
A.~N. was supported by RFBR grant 15-01-07650 and by St{.} Petersburg University grant 6.38.670.2013.
He also acknowledges the Link\"oping University for the financial support of his visit
in February 2015.

\vspace{\baselineskip}

\end{document}